\newenvironment{Eqnarray*}%
       {\arraycolsep 0.14em\begin{eqnarray*}}{\end{eqnarray*}}
       {\arraycolsep 0.14em\begin{array}}{\end{array}}
\theoremstyle{plain}
\newtheorem{Thm}{Theorem}[section]
\newtheorem{Lem}[Thm]{Lemma}
\newtheorem{Prop}[Thm]{Proposition}
\newtheorem{Cor}[Thm]{Corollary}
\theoremstyle{definition}
\theoremstyle{remark}
\numberwithin{equation}{section}
\def\C{\mathbb C}  \def\bD{\mathbb D}   
  \def\bP{\mathbb P}  
\def\R{\mathbb R}    \def\Z{\mathbb Z}
  \def\b{\beta}        
          \def\s{\sigma}
           \def\y{\eta}
\def\z{\zeta}
\def\cD{\mathcal D}   
   \def\cH{\mathcal H}   
     \def\cN{\mathcal N}
\def\cO{\mathcal O}     
    \def\cS{\mathcal S} 
\def\cT{\mathcal T}      
\def\cW{\mathcal W}   
 \def\cZ{\mathcal Z}  
\def\sC{\mathscr C}   
\def\sS{\mathscr S}   \def\sT{\mathscr T} 
\def\sP{\mathscr P}
\DeclareMathAlphabet{\mathzap}{OT1}{pzc}{m}{it}
 \def\zD{\mathzap D}   \def\zE{\mathzap E}
\def\cyr{%
\renewcommand\rmdefault{wncyr}%
\renewcommand\sfdefault{wncyss}%
\renewcommand\encodingdefault{OT2}%
\normalfont
\selectfont}
\DeclareTextFontCommand{\textcyr}{\cyr}
\def\Ga{\mathfrak a}   
\def\Gf{\mathfrak f}       
  \def\Gm{\mathfrak m}     
\def\Gp{\mathfrak p}
\def\GC{\mathfrak C}    
\def\GS{\mathfrak S}
\def\Imag{\operatorname{Im}}    \def\Real{\operatorname{Re}}      
\def\RP{\R\bP}
\def\CP{\C\bP} 
\def\del{\partial}
\def\({ \left( }     \def\){ \right) }
\def\<{ \left\langle } \def\>{ \right\rangle }
\def\hasira{\rule{0mm}{2eX}}
\def\pv{{\bf pv.}\!\!}
\newcommand{\pd}[1]{\frac{\del}{\del #1}}
\newcommand{\PD}[2]{\frac{\del #1}{\del #2}}
\newcounter{mynum}
\newcounter{mynum2}[mynum]
\title{An integral transform on a cylinder and \\ the twistor correspondence}
\author{Fuminori Nakata\footnote{Supported by Grant-in-Aid for 
Scientific Research of the Japan Society for the Promotion of Science.}}
\begin{document}
\maketitle

\begin{abstract}
Twistor correspondences for $\R$-invariant 
indefinite self-dual conformal structures on $\R^4$ 
are established explicitly. 
These correspondences are written down by 
using a natural integral transform from 
functions on a two dimensional cylinder to functions 
on the flat Lorentz space $\R^{1,2}$ which is 
related to the wave equation and the Radon transform. 
A general method  on the twistor construction of 
indefinite self-dual 4-spaces and indefinite Einstein-Weyl 3-spaces 
are also summarized. 
\end{abstract}

\vspace{8mm}
{\noindent
{{\it Mathematics Subject Classifications} (2010) : 
  53C28,  
  35L05,  
  53C50,  
  32G10. \\  
  {\it Keywords} : \  twistor method, holomorphic disks, 
   indefinite metric, wave equation, \\ 
   \hspace{17mm} monopole equation, Radon transform.}

\section{Introduction}
Twistor correspondence for self-dual Zollfrei conformal structure 
of the neutral signature $(--++)$ 
was established by C.~LeBrun and L.~J.~Mason \cite{bib:LM05}, 
here an indefinite conformal structure is called {\it Zollfrei}  
if and only if all the maximal null geodesics are closed. 
In this theory, the twistor space is the pair $(\CP^3,P)$ 
where $P$ is an embedded $\RP^3$ in $\CP^3$, and 
the self-dual space is recovered as 
the space of holomorphic disks on $\CP^3$ with boundaries 
lying on $P$. 
LeBrun and Mason also showed that any neutral self-dual Zollfrei 
4-manifold is compact and is homeomorphic to $S^2\times S^2$ or 
$(S^2\times S^2)/\Z_2$. 

LeBrun and Mason also established 
twistor correspondence for indefinite Einstein-Weyl structure on 
$\R\times S^2$ in \cite{bib:LM08} (see also \cite{bib:Nakata09}). 
These two types of twistor correspondences are related by the 
Jones-Tod reduction theory \cite{bib:JT}, 
which say that an Einstein-Weyl 3-space is obtained as 
the orbit space of 1-dimensional group action on a self-dual 4-space,  and conversely that self-dual 4-spaces are obtained 
from solutions of the generalized 
monopole equation on an Einstein-Weyl 3-space. 

Following LeBrun and Mason, the author wrote down the 
twistor correspondence for Tod-Kamada metrics explicitly in
\cite{bib:NakataTran}. 
Tod-Kamada metrics are $S^1$-invariant 
indefinite self-dual metrics on $S^2\times S^2$ 
which are first constructed by K.~P.~Tod \cite{bib:Tod93} 
and are also rediscovered by H.~Kamada 
in the investigation of indefinite K\"ahler surfaces
with Hamiltonian $S^1$-symmetry \cite{bib:Kamada05}.
As a consequence of this work, 
we find that all the Tod-Kamada metrics are Zollfrei. 

The construction of Tod-Kamada metrics are based on the Jones-Tod
reduction, and is obtained from solutions of the monopole equation on 
the de Sitter 3-space $S^3_1$. 
In the study of twistor correspondence for Tod-Kamada metric, 
we obtain simple expressions of 
solutions of the monopole equation or the wave equation on $S^3_1$ 
in terms of Radon-type integral transforms from 
functions on $S^2$ to functions on $S^3_1$. 
This result may give a new insight for the theory of hyperbolic PDEs. 

In this article 
we relax the Zollfrei condition, 
and study the twistor correspondence 
for $\R$-invariant indefinite self-dual metrics on $\R^4$ 
by a similar method as the Tod-Kamada metric case. 
We start from the twistor correspondence for the 
flat metric and deform it respecting an $\R$-action. 
This construction can also be considered 
as the indefinite analogue of 
the taub-NUT metric
 (see \cite{bib:Besse} for the taub-NUT metric and 
 its twistor space).

Similarly to the Tod-Kamada case, our twistor correspondence 
is related to the theory of hyperbolic PDEs and integral transforms. 
In this article, we need some results for the wave equation on 
the Lorentz space $\R^{1,2}=\{(t,x_1,x_2)\}$:  
\begin{equation} \label{eq:wave_eq}
 - \Box u = -\frac{\del^2 u}{\del t^2}+\frac{\del^2 u}{\del x_1^2}
 +\frac{\del^2 u}{\del x_2^2}=0. 
\end{equation} 
We introduce a natural integral transform 
from functions on a two dimensional cylinder to functions 
on $\R^{1,2}$ of which the image solves the wave equation 
\eqref{eq:wave_eq}. 
One of the two ways of the twistor correspondence, 
from a twistor space to a self-dual space, 
is written down by using this integral transform. 
The converse correspondence is obtained 
by using a general formula for the solutions to 
the wave equation \eqref{eq:wave_eq}
which is written 
in terms of Radon transform (Theorem \ref{Thm:Inverse}).

In the LeBrun-Mason theory, 
the Zollfrei condition gives a nice restriction on the 
space of self-dual metrics. 
Instead of assuming the Zollfrei condition, 
in this article 
we assume the rapidly decreasing condition for the 
solutions of wave equation. 
Under this rapidly decreasing condition, 
we can reasonably establish the correspondence from 
self-dual metric to the twistor space.

The organization of the paper is the following. 
In Section 2, we introduce an integral transform 
of which the image solves the wave equation \eqref{eq:wave_eq}. 
Its inverse correspondence is given in Theorem \ref{Thm:Inverse}. 
In Section 3, we summarize the general method 
of twistor-type construction of indefinite 
self-dual 4-spaces and indefinite Einstein-Weyl 3-spaces 
as the space of holomorphic disks. 
In Section 4, we will write down the twistor correspondence for 
the flat indefinite metric on $\R^4$ and its $\R$-quotient explicitly.
Finally we study the deformation of this standard case in 
Section 5 and 6. In section 5, we deform the twistor space 
and determine the corresponding self-dual spaces, 
and the converse is studied in Section 6. 

\vspace{1ex} 
\noindent
{\bf Notations.} \ 
We denote the complex unit disk 
by $\bD=\{\omega\in\C \mid  |\omega|\le 1\}$. 
For the pair $(Z,P)$ of a complex manifold $Z$ and 
a totally real submanifold $P\subset Z$, a 
{\it holomorphic disk on} $(Z,P)$ means 
the image of a continuous map 
$(\bD,\del\bD)\to (Z,P)$ which is holomorphic 
on the interior of $\bD$.

\section{Planar circles on a cylinder} \label{Section:circles}

We first study the geometry of circles on a two dimensional cylinder. 
We introduce an integral transform and 
explain the relation with the wave equation 
on the flat Lorentz space $\R^{1,2}$. 

\noindent
{\bf The two dimensional cylinder and planar circles.} \ 
Let $ \sC = \{ (\omega,v) \in \R^2\times \R \mid |\omega|=1 \} 
 \simeq S^1\times \R$ 
be a two dimensional cylinder embedded in $\R^3$. 
Each plane on $\R^3$ cuts out a circle on $\sC$ if the plane 
is not parallel to the $v$-axis. 
We call such circles {\it planar}, and let $M$ be the set of 
planar circles on $\sC$. 
Then $M$ is an affine subset of the set of planes on $\R^3$, 
and is coordinated by $(t,x)\in \R\times \R^2$ so that 
the corresponding planar circle $C_{(t,x)}$ is 
cut out by the plane 
$\{(\omega,v) \in \R^2\times \R \mid v=t+\<\omega,x\>\}$, that is, 
\begin{equation} \label{eq:planar_circle}
	C_{(t,x)} =\{(\omega,v) \in \sC  
	\mid v=t+\<\omega,x\>\}. 
\end{equation} 

Now we equip $M$ a flat Lorentz metric 
\begin{equation} \label{eq:Lorentz}
	g= -dt^2+ dx_1^2+ dx_2^2 \qquad\quad (x=(x_1,x_2)). 
\end{equation} 
This metric naturally arises from the twistor correspondence 
which we will see later (Section \ref{Section:model}). 
With respect to this Lorentz metric, the geometry on $M$
is nicely related to the geometry on $\sC$ in the following way. 

First, for each point $p\in \sC$ 
let $\Pi_p \subset M$ 
be the set of circles passing through $p$, i.e. 
$$ \Pi_p= \{ (t,x)\in M \mid p \in C_{(t,x)} \}
 = \{(t,x)\in M \mid v=t+\<\omega,x\> \}  \qquad (p=(\omega,v)). $$ 
Then $\Pi_p$ is a {\it null plane} on $(M,g)$, that is 
the metric $g$ degenerates on $\Pi_p$. 
Since every null plane on $M$ is written 
in this way by a unique $p\in \sC$, 
the cylinder $\sC$ is identified with the set of  
null planes on $M$. 

Next notice that each geodesic (i.e. straight line) on $M$ 
corresponds to a 1-parameter family of planes on $\R^3$ 
with a common axis. 
If this axis intersects with $\sC$ at two 
distinct points $p$ and $q$, 
then the planes of this family cut out planar circles 
passing through common points $p$ and $q$. 
In this case, the geodesic is written as $\Pi_p\cap \Pi_q$ 
and is a {\it space-like geodesic}. 
If the axis is tangent to $\sC$ at $p$, then we obtain 
a family of planar circles which are mutually tangent at $p\in\sC$. 
In this case, the geodesic is a {\it null geodesic} 
contained in the null plane $\Pi_p$. 
If the axis is apart from $\sC$, then 
we obtain a family of planar circles which folliate the cylinder 
$\sC$. This family corresponds to a {\it time-like geodesic}. 

Finally, let us notice the two domains $\Omega^\pm_{(t,x)}$ 
on $\sC$ which are divided by $C_{(t,x)}$:  
\begin{equation} \label{eq:domain}
	\Omega^\pm_{(t,x)} = \left\{ \left. 
	(\omega,v)  \in \sC
	\ \right| \  \pm(t+\<\omega,x\> -v) \ge 0 \right\}. 
\end{equation}
Then the sets  
$\{ (t',x')\in M \mid C_{(t',x')} \subset \Omega^\pm_{(t,x)} \}$ 
are the {\it future} and the {\it past cone} with vertex at 
$(t,x)\in M$. 

Later, we also use a complex parameter $z=x_1+ix_2$ 
and we write $\omega=(\cos\theta,\sin\theta)=e^{i\theta}$ 
identifying $S^1\cong U(1)$. 
Then \eqref{eq:planar_circle} is also written as 
\begin{equation} \label{eq:planar_circle2}
 C_{(t,z)} =\{(\omega,v) \in \sC  
  \mid v=t+\Real(z e^{-i\theta})\}. 
\end{equation}

\vspace{1ex}
\noindent
{\bf Integral transform $R$ and the wave equation.} \ 
Now we introduce an integral transform 
$R : C^\infty(\sC) \to C^\infty(M)$ so that 
for given function $h(\omega,v)\in C^\infty(\sC)$ 
the function $Rh(t,x)\in C^\infty(M)$ is
\begin{equation} \label{eq:transform}
  Rh(t,x)= \frac{1}{2\pi} \int_{C_{(t,x)}} h \, d\theta
  = \frac{1}{2\pi} 
  \int_{|\omega|=1} h\left( \omega, t+\<\omega,x\>\right) d\theta 
  \qquad (\omega=e^{i\theta}). 
\end{equation}
A significant property of the transform $R$ is that 
its image $u=Rh$ satisfies the wave equation \eqref{eq:wave_eq}. 
Here notice that 
$R$ has a non-trivial kernel. 
Actually, if the function $h(\omega,v)$ is independent of $v$, 
then $Rh(t,x)$ is constant and is equal to 
the constant term of the Fourier expansion of $h=h(\omega)$ 
which can be vanish for non-trivial $h$. 
One natural way to avoid such obvious kernel 
is to replace the function space $C^\infty(\sC)$ with 
the rapidly decreasing functions $\cS(\sC)$, where 
$ \cS(\sC)$ is the space of 
smooth functions $h$ on $\sC$ which for 
any integers $k,l\ge 0$ and any differential operator $D$ on $S^1$ 
satisfy  
$$ \sup_{(\omega,v)\in \sC} 
 \left| (1+|v|^k) \frac{\del^l}{\del v^l} 
 (Dh) (\omega,v) \right| <\infty.$$

\vspace{1ex} 
\noindent
{\bf Radon transform and the inverse of $R$.} \
The inverse transform of $R$ is obtained using 
the {\it Radon transform}, which we summarize here 
(see \cite{bib:Helgason} for the detail). 
Let $M_0=\{(0,x)\in M\}\simeq \R^2$ be the {\it initial plane}. 
Notice that each null surface 
$\Pi_{(\omega,v)}\subset M$ corresponding to $(\omega,v)\in \sC$  
intersects with $M_0$ by a straight line 
$$l_{(\omega,v)}= \Pi_{(\omega,v)} \cap M_0 
 = \{x\in M_0\mid \<\omega,x\>=v\}.$$ 
Since $l_{(\omega,v)}=l_{(-\omega,-v)}$, the cylinder
$\sC$ is identified with the 
double cover of the set of straight lines on $M_0$. 

Let $\cS(M_0)$ be the space of rapidly decreasing functions on $M_0$. 
Here a smooth function $f(x)$ on $M_0$ is called rapidly decreasing 
if and only if, for each integer $k,l,m\ge 0$, $f$ satisfies 
$$ \left| |x|^k \del_{x_1}^l \del_{x_2}^m f(x)  \right| <\infty. $$ 
For given function $f(x)\in \cS(M_0)$, 
its Radon transform $\hat f(\omega,v)\in \cS(\sC)$ is defined by
\begin{equation}
 \hat f(\omega,v)= \int_{l_{(\omega,v)}} f \, dm 
\end{equation}
where $dm$ is the Euclid measure on the line $l_{(\omega,v)}$. 
On the other hand, for a function 
$h(\omega,v)\in C^\infty(\sC)$, 
its {\it dual Radon transform} 
$\check h (x) \in C^\infty(M_0)$ is defined by 
$$ \check h (x)= \frac{1}{2\pi} \int_{|\omega|=1} 
 h(\omega,\<\omega,x\>) d\theta = Rh(0,x). $$
The inversion formula of the Radon transform 
for $f(x)\in \cS(M_0)$ is 
\begin{equation} \label{eq:Radon_inversion}
 f= \frac{1}{2i} \left( \cH_v \del_v \hat f \right)^{\vee} 
  \qquad \text{where} \qquad 
 \cH_v h (\omega,v) = \frac{i}{\pi} \, 
 \pv \int_{-\infty}^\infty \frac{h(\omega,\nu)}{\nu-v} d\nu. 
\end{equation}
Recall that $\cH_v$ is called the {\it Hilbert transform}.

Now the inverse of $R$ is given in the following way. 

\begin{Thm} \label{Thm:Inverse} 
 Suppose $u(t,x)\in C^\infty(M)$ 
 satisfies the following conditions: 
 \vspace{-1eX} 
 \begin{itemize} 
   \item $u$ solves the wave equation, i.e. $\Box u=0$, \vspace{-1eX} 
   \item $f_0(x)=u(0,x)$ and $f_1(x)=u_t(0,x)$ are rapidly 
         decreasing functions on $\R^2$. 
 \end{itemize} 
 Then we can write $u=Rh$ where $h\in\cS(\sC)$ is defined by 
 \begin{equation} 
  h=  \frac{1}{4\pi i} \cH_v (\del_v \hat f_0+ \hat f_1).  
 \end{equation}
\end{Thm}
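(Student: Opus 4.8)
The plan is to solve the wave equation \eqref{eq:wave_eq} by Fourier methods in the two spatial variables, express the solution via the Radon transform of its Cauchy data, and then recognize the resulting formula as $Rh$ for the stated $h$. First I would recall the classical representation of the solution to the Cauchy problem: writing the spatial Fourier transform of $f_0,f_1$ as $\widetilde{f_0},\widetilde{f_1}$, the solution is
\begin{equation*}
 u(t,x)= \frac{1}{(2\pi)^2}\int_{\R^2}
 \left( \widetilde{f_0}(\xi)\cos(t|\xi|)
 + \widetilde{f_1}(\xi)\frac{\sin(t|\xi|)}{|\xi|}\right)
 e^{i\<\xi,x\>}\, d\xi.
\end{equation*}
The key idea is to pass to polar coordinates $\xi = \rho\,\omega$ with $\rho>0$, $\omega=e^{i\theta}\in S^1$, extend the $\rho$-integral to all of $\R$ by the standard parity trick (so that $\cos$ and $\sin$ combine into exponentials $e^{\pm i\rho t}$), and thereby exhibit $u(t,x)$ as a superposition $\int_{S^1} U(\theta,\,t+\<\omega,x\>)\,d\theta$ of plane waves travelling in the directions $\omega$. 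Each such plane-wave profile depends only on the single variable $v = t+\<\omega,x\>$, which is exactly the defining combination in \eqref{eq:planar_circle}; hence the superposition is of the form $Rh$ with $h(\omega,v)$ equal to that profile.

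Next I would identify the profile. The one-dimensional inverse Fourier integral in $\rho$ of the Cauchy data, after the polar change of variables, is precisely a multiple of the dual-Radon-type reconstruction: using the projection-slice relation $\widetilde{f}(\rho\omega) = \int_{-\infty}^{\infty} \hat f(\omega,v)\, e^{-i\rho v}\, dv$, the factor of $|\xi| = |\rho|$ in the denominator (for the $f_1$ term) and the extra power of $\rho$ produced by the Jacobian $d\xi = |\rho|\,d\rho\,d\theta$ (for the $f_0$ term) together convert differentiation and the $|\rho|\leftrightarrow \sgn(\rho)$ multiplier into $\del_v$ followed by the Hilbert transform $\cH_v$, exactly as in the inversion formula \eqref{eq:Radon_inversion}. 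Matching constants carefully — the $1/4\pi i$ in the statement should drop out of the $(2\pi)^{-2}$ together with the $1/2\pi$ normalisation built into $R$ in \eqref{eq:transform} and the $i/\pi$ in the definition of $\cH_v$ — yields $h = \tfrac{1}{4\pi i}\,\cH_v(\del_v\hat f_0 + \hat f_1)$. Finally I would check that this $h$ lies in $\cS(\sC)$: since $f_0,f_1\in\cS(M_0)$, their Radon transforms lie in $\cS(\sC)$ by the standard mapping properties of the Radon transform (see \cite{bib:Helgason}), and both $\del_v$ and $\cH_v$ preserve $\cS(\sC)$ (the Hilbert transform preserves Schwartz-type decay in $v$ because $\del_v\hat f_0+\hat f_1$, being a $v$-derivative of a Radon transform, has vanishing $v$-integral).

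The main obstacle I anticipate is bookkeeping rather than conceptual: keeping track of the constants and of the $\sgn(\rho)$ versus $|\rho|$ factors through the polar-coordinate splitting, and justifying the interchange of integrals (Fourier in $\xi$ against the $d\theta$ integral) — this is where the rapid-decrease hypothesis on $f_0,f_1$ is essential, guaranteeing absolute convergence and that all the transforms land in Schwartz-type spaces. A secondary point to handle with care is that $R$ has a kernel (constants, as noted after \eqref{eq:transform}); one must verify that the constructed $h$ is the canonical representative, i.e. that $\del_v\hat f_0+\hat f_1$ has no $\theta$-independent, $v$-independent component, which again follows from the rapid decay forcing the relevant zero-frequency data to vanish. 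Once convergence and constants are settled, comparing the resulting integral formula for $u$ termwise with \eqref{eq:transform} completes the proof that $u = Rh$.
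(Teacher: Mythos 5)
Your argument is correct in outline but takes a genuinely different route from the paper. The paper's proof is a two-step indirect one: it uses the fact, already recorded in Section \ref{Section:circles}, that $Rh$ solves $\Box (Rh)=0$ for \emph{any} $h$, then verifies via the inversion formula \eqref{eq:Radon_inversion} (together with the parity $\hat f(-\omega,-v)=\hat f(\omega,v)$, which kills the cross terms $(\cH_v\hat f_1)^\vee$ and $(\cH_v\del_v^2\hat f_0)^\vee$) that $Rh$ has Cauchy data $(f_0,f_1)$ at $t=0$, and concludes $u=Rh$ by uniqueness for the Cauchy problem. You instead start from the explicit Fourier representation of the solution and transform it directly into the form $Rh$ by polar coordinates, the projection--slice theorem and the $\sgn(\rho)$/$|\rho|$ multiplier identities. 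Your route is self-contained --- it effectively reproves the Radon inversion formula along the way --- at the cost of carrying all the constants through one long computation; the paper's route cleanly quarantines the analysis into two quotable black boxes (Radon inversion and hyperbolic uniqueness) and only needs the easy verification of the initial data. Note that uniqueness is not really avoided in your version either: it is what licenses the claim that the Fourier integral \emph{is} $u$.

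One concrete point needs repair. Your justification that $h\in\cS(\sC)$ asserts that $\del_v\hat f_0+\hat f_1$ has vanishing $v$-integral. That is true for the summand $\del_v\hat f_0$ (it is an exact $v$-derivative), but false in general for $\hat f_1$: by the zeroth moment condition, $\int_{-\infty}^{\infty}\hat f_1(\omega,v)\,dv=\int_{M_0}f_1\,dm$, which need not vanish, so $\cH_v\hat f_1$ a priori decays only like $1/v$. If you want rapid decay of $h$ itself you must either impose $\int f_1=0$, or argue differently (this issue is independent of the identity $u=Rh$, which only requires enough decay to justify the integral manipulations, and your parity/antipodal cancellation already disposes of the kernel of $R$ correctly).
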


This theorem is easily proved by the inversion formula 
\eqref{eq:Radon_inversion} and the 
uniqueness theorem for the solutions to 
a hyperbolic PDE (see \cite{bib:Helgason}).

\section{General method on the twistor correspondence} 
\label{Section:method} 

In this section, we summarize the general method 
to construct indefinite self-dual 4-spaces and 
indefinite Einstein-Weyl 3-spaces from a family of holomorphic disks. 
The detail is found in \cite{bib:LM05} for the 
self-dual case, and in \cite{bib:Nakata09} for the 
Einstein-Weyl case. 

\vspace{1ex}
\noindent
{\bf Partial indices of a holomorphic disk.} \ 
First, we recall 
the notion of the {\it partial indices} of a holomorphic disk 
(see \cite{bib:LeBrun06}). 
Let $(Z,P)$ be the pair of a complex $n$-manifold $Z$ and a 
totally real submanifold $P$, 
and $(D,\del D)$ be a holomorphic disk on $(Z,P)$. 
We write $\cN$ the complex normal bundle of $D$ in $Z$ and 
$\cN_\R$ the real normal bundle of $\del D$ in $P$. 
Notice that we have $\cN_\R\subset \cN|_{\del D}$. 
We can construct a virtual holomorphic vector bundle 
$\widehat\cN$ on the double $\CP^1= D\cup_{\del D} \overline D$ 
by patching $\cN\to D$ and $\overline \cN \to \overline D$ 
so that $\cN_\R$ coincides. 
Then we can write 
$\widehat\cN= \cO(k_1)\oplus \cdots \oplus\cO(k_{n-1})$, 
and we call the $(n-1)$-tuple of integers 
$(k_1,\cdots,k_{n-1})$ the {\it partial indices} 
of the holomorphic disk $D$. 
These indices is uniquely determined by $D$ up to permutation. 

\vspace{1ex}
\noindent
{\bf Construction of self-dual conformal structures of signature $(--++)$.} \ 
Let $\sT$ be a complex 3-manifold and let $\sT_\R$ be a totally real 
submanifold on $\sT$. 
Suppose that there exists a family of holomorphic 
disks on $(\sT,\sT_\R)$ smoothly parametrized by a real 4-manifold 
$M$. 
Such family is described by the following diagram: 
\begin{equation} \label{eq:double_fibration_SD}
	 \xymatrix{ 
	 & (\cZ,\cZ_\R) \ar[ld]_{\Gp}
	 \ar[rd]^{(\Gf,\,\Gf_\R)} & \\ 
	 M  & & (\sT,\sT_\R) 
	 } 
\end{equation}
where $(\cZ,\cZ_\R)$ is a smooth $(\bD,\del \bD)$-bundle on $M$, 
$\Gf$ is a smooth map which is holomorphic along each fiber of $\Gp$, 
and $\Gf_\R$ is the restriction of $\Gf$. 
For each $x\in M$ let us write $D_x=\Gf(\Gp^{-1}(x))$
which is the holomorphic disk corresponding to $x$. 
Now we assume the following conditions: 
\begin{itemize}
	\item[A1)] the differential $\Gf_*$ is of full-rank 
	  on $\cZ\setminus \cZ_\R$ (i.e. $\Gf$ is locally isomorphic 
	  on $\cZ\setminus \cZ_\R$), 
	\item[A2)] the differential $(\Gf_\R)_*$ is of full-rank, and 
	\item[A3)] for each $x\in M$, the partial indices 
	 of the corresponding disk $D_x$ is $(1,1)$. 
\end{itemize}

In this setting, the following holds 
(see Section 10 of \cite{bib:LM05}). 

\begin{Prop} \label{Prop:recovering_thm_SD}
There exists a unique smooth self-dual conformal structure 
$[g]$ of signature $(--++)$ on $M$ such that 
for each $p\in \sT_\R$ 
the set $\GS_p=\{ x\in M \mid p\in \del D_x \}$
gives a $\b$-surface for $[g]$. 
\end{Prop}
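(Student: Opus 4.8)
The plan is to reconstruct the conformal structure from the geometry of $\b$-surfaces, following the self-dual Penrose correspondence as in \cite{bib:LM05}. First I would recall the relevant fact about partial indices: condition A3) says each disk $D_x$ has normal bundle $\widehat\cN\cong\cO(1)\oplus\cO(1)$ on the double $\CP^1$, so $H^0(\CP^1,\widehat\cN)$ is $4$-dimensional and $H^1$ vanishes. By the standard Kodaira-type deformation argument (in the totally-real-boundary setting of LeBrun--Mason), the family of holomorphic disks is unobstructed and the parameter space $M$ is naturally identified, at each $x$, with $H^0(\CP^1,\widehat\cN)$; thus $T_xM\cong H^0(\CP^1,\widehat\cN)$. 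This is the structural input that lets us transfer linear-algebraic data on $\CP^1$ to $M$.

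Next I would define the null cone. For each $p\in\sT_\R$ set $\GS_p=\{x\in M\mid p\in\del D_x\}$ as in the statement. Using A2), $\Gf_\R$ is an immersion, so the incidence condition $p\in\del D_x$ cuts out, for $p$ varying near a fixed boundary point of $D_x$, a smooth hypersurface germ through $x$; intersecting over the $1$-parameter worth of boundary points of $D_x$ that meet $\sT_\R$ gives that $\GS_p$ is a smooth surface near $x$ and its tangent space $T_x\GS_p\subset T_xM\cong H^0(\CP^1,\widehat\cN)$ is exactly the subspace of sections vanishing at the corresponding point of $\del D_x\subset\CP^1$. Sections of $\cO(1)\oplus\cO(1)$ vanishing at a point of $\CP^1$ form a $2$-plane, and as the point ranges over $\CP^1$ these sweep out precisely the rank-$2$ elements of a Segre-type cone in $H^0(\widehat\cN)\cong\C^2\otimes\C^2$ — equivalently, a cone of totally null $2$-planes for a conformal class of split-signature quadratic forms on $T_xM$. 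Declaring exactly these $2$-planes to be the $\b$-planes (the self-dual null $2$-planes) determines a unique conformal structure $[g]$ of signature $(--++)$ at $x$, since in neutral signature a conformal structure is pinned down by its family of one kind of null $2$-planes. Smoothness of $[g]$ in $x$ follows because the identification $T_xM\cong H^0(\CP^1,\widehat\cN)$ and the Segre cone vary smoothly with $x$.

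Then I would check that the $\GS_p$ are genuinely $\b$-surfaces: by construction each $T_x\GS_p$ is a $\b$-plane, and $\GS_p$ is a $2$-surface all of whose tangent planes are of this form, which is the definition of a $\b$-surface (one also wants it to be totally geodesic/null-integrable, but in neutral signature a surface tangent everywhere to $\b$-planes is automatically a $\b$-surface for the induced $[g]$ — this is where self-duality of $[g]$ comes out, via the integrability of the $\b$-plane distributions parametrized by $\sT$). Finally, uniqueness: if $[g']$ is another conformal structure making every $\GS_p$ a $\b$-surface, then $[g']$ has the same $\b$-planes at every point (they must contain each $T_x\GS_p$, and A1)--A2) guarantee enough $p$'s so that the $T_x\GS_p$ already exhaust the Segre cone), hence $[g']=[g]$.

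The main obstacle I expect is the deformation-theoretic identification $T_xM\cong H^0(\CP^1,\widehat\cN)$ with boundary values controlled by $\cN_\R$: one must set up the correct Riemann--Hilbert / $\bar\partial$-problem for holomorphic disks with totally real boundary, verify that A3) makes it Fredholm of index $4$ with surjective linearization (so $M$ is smooth of the right dimension and the Kodaira map is an isomorphism on tangent spaces), and track how the real normal bundle $\cN_\R$ picks out the real locus of the Segre cone — all of which is exactly the content of Section~10 of \cite{bib:LM05}, so in the paper this step would be cited rather than redone.
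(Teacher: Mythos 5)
Your outline is correct and follows essentially the same route as the paper, which itself offers no independent proof of this proposition but defers entirely to Section~10 of \cite{bib:LM05}: the identification of $T_xM$ with the (real form of the) space of holomorphic sections of $\widehat\cN\cong\cO(1)\oplus\cO(1)$, the Segre cone of sections vanishing at a boundary point defining the split-signature conformal class, integrability of the resulting $\b$-surfaces forcing self-duality, and uniqueness because the $\b$-planes determine $[g]$. Only minor imprecisions remain (the tangent space is the real $4$-dimensional space of sections compatible with the real structure, not all of $H^0$, and the null cone consists of the decomposable, i.e.\ rank-one, tensors in $\C^2\otimes\C^2$), neither of which affects the argument.
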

Here a {\it $\b$-surface} is a surface on which 
the conformal structure vanishes and of which the tangent bivector is 
anti-self-dual every where.

\vspace{1ex}
\noindent
{\bf Construction of Einstein-Weyl structure of signature $(-++)$.} \ 
There is a similar construction for three dimensional Einstein-Weyl structure. Recall that an Einstein-Weyl structure is the 
pair $([g],\nabla)$ of a conformal structure $[g]$ and an 
affine connection $\nabla$ satisfying 
the compatibility condition (Weyl condition) 
$\nabla g \propto g$ 
and the Einstein-Weyl condition $R_{(ij)} \propto g_{ij}$ 
where $R_{(ij)}$ is the symmetrized Ricci tensor of $\nabla$ 
(see \cite{bib:Hitchin82,bib:Nakata09}). 

Let $\sS$ be a complex surface and $\sS_\R$ be a totally real 
submanifold on $\sS$. 
Suppose that there exists a family of holomorphic 
disks on $(\sS,\sS_\R)$ smoothly parametrized by a real 3-manifold 
$\underline M$, which is described by the following diagram: 
\begin{equation} \label{eq:double_fibration_EW}
	\xymatrix{ 
	 & (\cW,\cW_\R) \ar[ld]_{\underline \Gp}
	 \ar[rd]^{( \underline \Gf,\, \underline  \Gf_\R)} & \\ 
	 \underline M  & & (\sS,\sS_\R) 
	 } 
\end{equation}
Let us assume the conditions: 
\begin{itemize}
	\item[B1)] the differential $\underline\Gf_*$ is of full-rank 
	 on $\cW\setminus \cW_\R$, 
	\item[B2)] the differential $(\underline\Gf_\R)_*$ is of full-rank, 
	 and  
	\item[B3)] for each $x\in \underline M$, the partial index 
	 of the corresponding disk 
	 $\underline D_x=\underline \Gf(\underline \Gp^{-1}(x))$ is $2$. 
\end{itemize}

Then the following holds (see \cite{bib:Nakata09}).

\begin{Prop} \label{Prop:recovering_thm_EW}
There exists a unique smooth Einstein-Weyl structure 
$([g],\nabla)$ of signature $(-++)$ on $\underline M$ 
so that for each $p\in \sS_\R$ the set 
$ \underline\GS_p=\{ x\in \underline M \mid p\in \del \underline D_x \}$
is a totally geodesic null surface on $\underline M$. 
\end{Prop}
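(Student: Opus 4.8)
The plan is to reconstruct $([g],\nabla)$ directly from the deformation theory of the disks $\underline D_x$, running the argument in parallel with the self-dual case of Proposition~\ref{Prop:recovering_thm_SD}: the partial index $2$ here plays the role that the partial indices $(1,1)$ play there.

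First I would, for each $x\in\underline M$, identify $T_x\underline M$ with sections of the doubled normal bundle. Infinitesimal deformations of $\underline D_x$ inside $(\sS,\sS_\R)$ are governed by the Riemann-Hilbert boundary value problem for the normal bundle $\cN$ with boundary condition $\cN_\R$; since by B3 the doubled bundle $\widehat\cN$ on $\CP^1=D\cup_{\del D}\overline D$ is $\cO(2)$, this problem is unobstructed and its solution space is canonically the space of real sections of $\cO(2)$ on $\CP^1$, a $3$-dimensional real vector space. Conditions B1 and B2 then ensure that $\underline\Gf$ and $\underline\Gp$ give a canonical linear isomorphism between $T_x\underline M$ and this space, depending smoothly on $x$. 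In a coordinate $z$ on $\CP^1$ for which $\del D$ is the fixed circle of the real structure, a section is a quadratic $c_0+c_1z+c_2z^2$ and the reality condition reads $c_2=\overline{c_0}$, $c_1\in\R$.

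Next I would define the conformal structure. On $H^0(\CP^1,\cO(2))$ the discriminant $\Delta=4c_0c_2-c_1^2$ is a quadratic form, well defined up to a nonzero scalar under changes of the trivialisation of $\cO(2)$ or of the coordinate on $\CP^1$; transporting it by the isomorphism above gives a conformal class $[g]_x$ on $T_x\underline M$, smooth in $x$, which on real sections equals $4|c_0|^2-c_1^2$ and hence is Lorentzian of signature $(-++)$. If $p\in\sS_\R$ lies on $\del\underline D_x$, say at $\hat p\in\del D$, then a tangent vector lies in $T_x\underline\GS_p$ exactly when the corresponding section vanishes at $\hat p$; writing such a section as $s=(z-\hat p)\,\ell(z)$ with $\ell$ linear one finds $\Delta(s)\propto\ell(\hat p)^2$, so the restriction of $[g]_x$ to $T_x\underline\GS_p$ is degenerate, i.e.\ $\underline\GS_p$ is a null surface. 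Conversely a real section with a double zero must have that zero on the fixed circle $\del D$, since its zero divisor is invariant under the real structure; hence every null $2$-plane of $[g]$ arises as some $T_x\underline\GS_p$, the family $\{\underline\GS_p\}$ sweeps out the full null cone at each point, and $[g]$ is thereby uniquely determined.

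It remains to produce $\nabla$ and to verify the Einstein-Weyl equations, and this is the step I expect to be the main obstacle. I would first show that a Lorentzian conformal $3$-manifold carrying such a family of null surfaces admits a \emph{unique} torsion-free Weyl connection for which all members of the family are totally geodesic: the Weyl connections compatible with $[g]$ form an affine space over $1$-forms, being totally geodesic along the $\underline\GS_p$ is an affine-linear condition on that $1$-form, and the fact that the $\underline\GS_p$ exhaust the null cone at every point makes the resulting system uniquely solvable. One then checks $R_{(ij)}\propto g_{ij}$ for this $\nabla$. The cleanest route for that is to complexify the disk family near each $\underline D_x$ to a genuine minitwistor configuration --- a complex surface containing an $\cO(2)$-family of rational curves parametrised by a complexification $\widehat M\supset\underline M$ --- to apply Hitchin's correspondence \cite{bib:Hitchin82} on $\widehat M$, and to restrict the resulting Einstein-Weyl structure to $\underline M$; its smoothness and uniqueness then follow from the previous steps. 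Alternatively one can verify $R_{(ij)}\propto g_{ij}$ directly by a Cartan-type computation in a null coframe adapted to the $\underline\GS_p$. The whole argument is the three-dimensional counterpart of Section~10 of \cite{bib:LM05} and is carried out in detail in \cite{bib:Nakata09}.
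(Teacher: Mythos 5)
Your sketch is essentially the argument the paper relies on: the paper states Proposition~\ref{Prop:recovering_thm_EW} without proof, deferring entirely to \cite{bib:Nakata09}, and your reconstruction (identifying $T_x\underline M$ with the real sections of the doubled normal bundle $\cO(2)$ via the index-$2$ Riemann--Hilbert problem, taking the discriminant as the conformal structure of signature $(-++)$, characterizing the $\underline\GS_p$ by sections with a zero on the boundary circle, and then singling out the unique compatible Weyl connection making them totally geodesic) is precisely the route carried out there, in parallel with Section~10 of \cite{bib:LM05}. The details you verify along the way (the reality condition $c_2=\overline{c_0}$, $c_1\in\R$; $\Delta\propto\ell(\hat p)^2$ on $T_x\underline\GS_p$; double zeros of real sections lying on the fixed circle) are all correct and consistent with the paper's explicit standard model \eqref{eq:underline_disk}.
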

Here recall that a surface on a conformal manifold $(\underline M,[g])$ 
of signature $(-++)$ is called {\it null} 
if and only if $[g]$ degenerate on it.

The Einstein-Weyl structure on $\underline M$ defined by 
Proposition \ref{Prop:recovering_thm_EW} 
satisfies the following properties (c.f.\cite{bib:Nakata09}): 
\begin{itemize} 
	\item for each distinguished $p,q\in \sS_\R$, the set 
	 $\underline\GS_p \cap \underline\GS_q $ is a {\it space-like geodesic}, 
	\item for each $p$ and non-zero $v\in T_p\sS_\R$, the set 
	 $\underline\GC_{p,v}= \{ x\in \underline M \mid p\in \del \underline D_x, 
	 v\in T_p(\del \underline D_x)\}$ is a 
	 {\it null geodesic}, and 
	\item for each $\z \in \sS\setminus \sS_\R$, the set 
	 $\underline\GC_{\z}= \{ x\in \underline M \mid \z \in \underline D_x\}$ 
	 is a {\it time-like geodesic}. 
\end{itemize}

\section{Standard model} \label{Section:model} 
In this section, we describe the twistor correspondence 
for $\R^{2,2}$ and $\R^{1,2}$ explicitly.

\vspace{1ex}
\noindent
{\bf Twistor correspondence for $\R^{2,2}$.} \ 
Let $H$ be the degree 1 holomorphic line bundle over 
$\CP^1$, and we put $\sT= H\oplus H$. 
The total space of $\sT$ can be embedded in $\CP^3$ as 
$$\sT =\{ [y_0:y_1:y_2:y_3]\in \CP^3 \mid (y_0,y_1)\neq (0,0)\} $$  
with the projection $[y_i] \mapsto [y_0:y_1]$. 
We introduce an antiholomorphic involution $\sigma$ on 
$\sT$ by 
$ \sigma : [y_0:y_1:y_2:y_3]\longmapsto 
 [\bar y_1: \bar y_0 : \bar y_3 : \bar y_2], $
and let $\sT_\R$ be the fixed point set of $\sigma$. Then 
$$ \sT_\R= \left\{ \left. 
 [e^{-\frac{i\theta}{2}}:e^{\frac{i\theta}{2}}:\y:\bar \y]\in \CP^3 \ \right| \ 
   (e^{i\theta},\y) \in S^1\times \C \right\}. $$

The space of $\s$-invariant holomorphic sections on 
$\sT=H\oplus H$ is parametrized by $\C^2$ so that $(a,b)\in \C^2$ 
correspondes to the section 
$$ L_{(a,b)}= 
 \left\{ [1:\omega : \bar a \omega +b : \bar b \omega + a] \in \sT 
 \mid \omega\in \C\cup\{ \infty\} \right\}.$$
Each section $L_{(a,b)}$ intersect with the set 
$\sT_\R$ by $S^1$, and divided into two disks 
\begin{equation} \label{eq:Disk}
 \begin{aligned} 
 D_{(a,b)} &=  \left\{ \left. 
 [1:\omega : \bar a \omega +b : \bar b \omega + a] \in \sT
 \ \right| \  \omega \in \bD \right\} \quad \text{and} \\
  D'_{(a,b)} &=  \left\{ \left. 
 [\omega : 1 : \bar a+b\omega : \bar b + a\omega ] \in \sT
 \ \right| \  \omega \in \bD \right\}. 
 \end{aligned} 
\end{equation}
Hence we obtain two $\C^2$-families of holomorphic disks 
$\{D_{(a,b)}\}$ and $\{D'_{(a,b)}\}$ on $(\sT,\sT_\R)$. 
In the rest of this article, we only deal with the 
family $\{D_{(a,b)}\}$. 

The double fibration associated with 
the holomorphic disks $\{D_{(a,b)}\}$ 
is given as follows. 
In our case, the parameter space $M$ is $\C^2$. 
Let $\Gp:(\cZ,\cZ_\R)\to \C^2$ be the trivial $(\bD,\bD_\R)$ 
bundle, 
and we define a map $\Gf:(\cZ,\cZ_\R)\to (\sT,\sT_\R)$ by 
$\Gf(a,b;\omega)= 
 [1:\omega : \bar a\omega +b : \bar b \omega + a].$ 
It is easy to verify that the conditions A1 to A3 hold
(see the final part of this section), 
so we obtain a self-dual conformal structure on $\C^2$
by Proposition \ref{Prop:recovering_thm_SD}. 
This conformal structure is characterized 
by the condition: 
for each $\omega\in U(1)$ and each constant $c\in\C$ the set 
$\{(a,b)\in \C^2 \mid \bar a\omega +b=c\}$ is a $\b$-surface. 
Since the flat metric 
\begin{equation} \label{eq:(2,2)metric}
	g= - |da|^2+ |db|^2 
\end{equation}
satisfies this condition, 
the induced self-dual conformal structure on $\C^2$ 
is represented by this metric. 
We write $\R^{2,2}$ for $\C^2$ equipped with this metric $g$.

\vspace{1ex}
\noindent
{\bf Twistor correspondence for $\R^{1,2}$ as a quotient.} \ 
In this part, we establish the twistor correspondence for $\R^{1,2}$ 
as the quotient of $\R^{2,2}$. 
Let $(\sT,\sT_\R)$ be as above, and let us 
introduce a $(\C,\R)$-action on $(\sT,\sT_\R)$ by 
\begin{equation} \label{eq:(C,R)-action}
 \nu \cdot [y_0:y_1:y_2:y_3] = 
[y_0:y_1:y_2-i\nu y_1: y_3 + i\nu y_0] \qquad\quad  (\nu\in \C). 
\end{equation}
Here ``$(\C,\R)$-action on 
$(\sT,\sT_\R)$'' is a $\C$-action on 
$\sT$ of which the restriction to $\R \subset \C$ 
preserves $\sT_\R$. 

The quotient space of this $(\C,\R)$-action is described as follows. 
Let $\sS = H^2$ be the degree 2 holomorphic line bundle on $\CP^1$. 
We use the weighted homogeneous coordinate 
$[y_0:y_1;v]$ on $\sS$ where $[y_0:y_1]$ is the coordinate 
of the base $\CP^1$ and 
$ [y_0:y_1;v] = [\lambda y_0: \lambda y_1 ; \lambda^2 v]$ 
for $\lambda \in \C^*.$
We introduce an involution $\sigma$ on $\sS$ by 
$[y_0:y_1;v]\mapsto [\bar y_1: \bar y_0 ; \bar v]$. 
The fixed point set of $\sigma$ is identified with the 
real cylinder $\sC= S^1\times \R$ so that 
$$ \sC = \left\{ \left. 
	 [e^{-\frac{i\theta}{2}}:e^{\frac{i\theta}{2}}; v] \in\sS \ 
	 \right| \ (e^{i\theta},v)\in S^1\times\R \right\}. $$
Let us define a map 
$\pi : (\sT,\sT_\R) \to (\sS,\sC)$ by :  
\begin{equation} \pi ([y_0:y_1:y_2:y_3]) =
 \left[ y_0:y_1; \frac{y_0y_2+y_1y_3}{2} \right]. 
\end{equation} 
Notice that each fiber of this map $\pi$ is a $\C$-orbit on $\sT$, 
and the real submanifold $\sT_\R$ is mapped onto $\sC$. 
Hence $\pi$ is the quotient map of 
the $(\C,\R)$-action on $(\sT,\sT_\R)$. 

Corresponding to this $(\C,\R)$-action on the twistor space 
$(\sT,\sT_\R)$, 
we can introduce an $\R$-action on the parameter space $\R^{2,2}$ 
so that $D_{\nu\cdot(a,b)}=\nu\cdot D_{(a,b)}$ for $\nu\in\R$
where $D_{(a,b)}$ is the holomorphic disk defined in \eqref{eq:Disk}. 
This $\R$-action is written as 
$ \nu \cdot (a,b)= (a+i\nu, b), $
so its quotient map is given by 
$\varpi:\R^{2,2} \to \R\times \C$ where 
$\varpi(a,b)= (\Real a,b).$ 
The quotient space $\R\times \C = \{(t,z)\}$ 
has a natural indefinite metric 
\begin{equation} \label{eq:(1,2)metric}
	\underline g = -dt^2 + |dz|^2 
\end{equation}
as a quotient of the metric $g$ in \eqref{eq:(2,2)metric}. 
We also write the quotient space $\R\times \C$ as $\R^{1,2}$.

Let $\nabla$ be the Levi-Civita connection of $\underline g$.
Then Jones-Tod theory \cite{bib:JT} asserts that the Einstein-Weyl 
structure $([g],\nabla)$ is the very structure which correspondes to 
the twistor space $(\sS,\sC)$ in the sense of 
Proposition \eqref{Prop:recovering_thm_EW}. 
Actually, suppose $\varpi(a,b)=(t,z)$ (i.e. $(t,z)=(\Real a,b)$), 
then $\pi$ maps the holomorphic disk 
$D_{(a,b)}$ to the holomorphic disk 
\begin{equation} \label{eq:underline_disk}
	 \underline D_{(t,z)} = \left\{ \left. 
	 \left[ 1:\omega ; 
	  {\textstyle \frac{z}{2}+ t \omega 
	  + \frac{\bar{z}}{2} \omega^2 } \right]  
	 \in \sS \ \right| \ \omega \in \bD \right\} 
\end{equation} 
on $(\sS,\sC)$. 
So the space $\R^{1,2}$ is considered as the parameter space of 
holomorphic disks $\{\underline D_{(t,z)} \}$ on $(\sS,\sC)$. 
We can construct a double fibration with respect to 
this family of holomorphic disks 
as follows.  
Let $\underline\Gp : (\cW,\cW_\R) \to \R^{1,2}$ 
be a trivial $\bD$ bundle, 
and let us define a map 
$\underline\Gf: (\cW,\cW_\R) \to (\sS,\sC)$ by 
$\underline\Gf(t,z,\omega)= [ 1:\omega \, ; 
 \frac{z}{2}+ t \omega + \frac{\bar{z}}{2} \omega^2 ]. $ 
Then, we obtain the following diagram: 
\begin{equation} \label{eq:diagram}
	 \xymatrix{ 
	  & (\cZ,\cZ_\R) \ar[ld]_\Gp \ar[rd]^\Gf \ar[d]^\Pi & \\ 
	 \R^{2,2} \ar[d]^\varpi 
	   & (\cW,\cW_\R) \ar[ld]_{\underline\Gp} \ar[rd]^{\underline\Gf} & 
	   (\sT, \sT_\R) \ar[d]^\pi \\ 
	 \R^{1,2} && (\sS,\sC) 
	} 
\end{equation} 
where $\Pi:(\cZ,\cZ_\R)\to (\cW,\cW_\R)$ is given by 
$(a,b,\omega)\mapsto (\Real a,b,\omega)$. 
We can check that the family $\{\underline D_{(t,z)}\}$ 
satisfies the condition B1 to B3. 
Since each $\b$-plane on $\R^{2,2}$ is mapped to a 
totally geodesic null surface on $\R^{1,2}$, 
we see that the induced Einstein-Weyl structure coincides to 
$([\underline g],\nabla)$.

Now the observation in Section \ref{Section:circles}
for planar circles on $\sC$ are explained as follows. 
Under the identification $z=x_1+ix_2$, we find that 
each planar circle $C_{(t,x)}$ on $\sC$ given by 
\eqref{eq:planar_circle} is obtained as 
the boundary circle of the holomorphic disk $D_{(t,z)}$ on 
$(\sS,\sC)$. 
The observations for null planes and geodesics are 
derived from the properties of 
the Einstein-Weyl structure wich is explained in 
the last part of Section \ref{Section:method}.

\vspace{1ex}
\noindent
{\bf Distributions.} \ 
In the rest of this section, 
we go into more detail of the above construction. 
We notice the distributions on $(\cZ,\cZ_\R)$ and 
$(\cW,\cW_\R)$ introduced from the diagram \eqref{eq:diagram} 
and give an explicit description of them.

First, we define a rank 3 complex distribution $\zE$ on 
$\cZ\setminus \cZ_\R$ by 
$\zE = \ker\{\Gf_*^{1,0} : T_\C\cZ\to T^{1,0}\cT\}$ 
where $\Gf_*^{1,0}$ is the composition of 
the differential $\Gf_*:T_\C\cZ \to T_\C\cT$ and the projection 
$T_\C\cT \to T^{1,0}\cT$. 
If we define complex tangent vector fields 
$\Gm_1$ and $\Gm_2$ on 
$\cZ=\{(a,b;\omega) \in \C^2\times \bD\}$ by 
\begin{equation} \Gm_1 = - 2\pd{\bar a}+  2\omega \pd{b}, 
\qquad \Gm_2 = - 2 \omega \pd{a} + 2 \pd{\bar b}, 
\end{equation}
then we obtain 
$\zE = \< \Gm_1,\Gm_2, {\textstyle \pd{\bar \omega}} \> $
on $\cZ\setminus \cZ_\R$. 
Notice that on $\cZ_\R$ we have $\Gm_2= \omega \, \overline \Gm_1$. 

We also define a rank 2 real distribution $\zD$ on $\cZ_\R$ by 
$\zD = \ker\{(\Gf_\R)_* : T\cZ_\R \to T\sT_\R\}$. 
Then $\zD \otimes \C =\< \Gm_1,\Gm_2\>$. 
Notice that the self-dual indefinite metric $[g]$ 
on $\C^2=\R^{2,2}$ is defined so that 
for each $u\in \cZ_\R$ the tangent plane 
$\Gp_*(\zD_u)$ in $T_{\Gp(u)}\C^2$ gives a $\b$-plane for $[g]$. 
Such conformal structure must be represented by 
the flat metric \eqref{eq:(2,2)metric} as we already explained.

Similarly, 
let us define 
a rank 3 complex distribution $\underline\zE$ on $\cW\setminus \cW_\R$ 
by $\underline \zE = \ker\{\underline \Gf_*^{1,0} : T_\C\cW\to T^{1,0}\sS\}$ 
and a rank 2 real distribution $\underline \zD$ on $\cW_\R$ 
by $\underline\zD=\ker\{\underline\Gf_*: T\cW_\R \to T\sC\}$. 
If we define complex tangent vector fields on 
$\cW=\{(t,z;\omega)\}$ by 
\begin{equation} \label{eq:underline_Gm}
	 \underline\Gm_1= -\pd{t} + 2\omega \pd{z}, 
	 \qquad  \underline\Gm_2 = 
	 - \omega \pd{t} + 2\pd{\bar z},  
\end{equation} 
then we obtain 
$$  \begin{aligned} 
	  \underline \zE  
		= \< \underline\Gm_1, \underline\Gm_2, 
		 {\textstyle \pd{\bar \omega}} \> 
		 \qquad & \text{on}\quad 
		 \cW\setminus \cW_\R,  \\ 
	 \underline \zD\otimes \C = \< \underline\Gm_1,\underline\Gm_2\>
		 \qquad & \text{on}\quad 
		 \cW_\R. 
	\end{aligned} $$
The Einstein-Weyl structure on $\R\times \C=\R^{1,2}$ 
is defined 
so that the image of each integral surface of $\underline\zD$ by 
the projection $\underline\Gp$ gives a totally geodesic null surface. 

Notice that $\Pi_*(\Gm_i)=\underline\Gm_i$ for $i=1,2$. 
More precisely, if we consider $(\cZ,\cZ_\R)$ as 
the trivial $\R$-bundle over $(\cW,\cW_\R)$ 
with a fiber coordinate $s=\Imag a$, then we obtain 
\begin{equation} \label{eq:relation_vectors}
	\Gm_1 = \underline\Gm_1 - i \frac{\del}{\del s}, \qquad 
	\Gm_2 = \underline\Gm_2 + i\omega \frac{\del}{\del s}. 
\end{equation}

\section{Deformation: from the twistor to the space-time} 
\label{Section:Twistor_to_ST}

In this section, we deform 
the standard twistor space $(\sT,\sT_\R)$ 
and determine the corresponding self-dual conformal structures explicitly. 
In this article, we only consider 
$\R$-invariant deformation of the real twistor space $\sT_\R$ 
by which the quotient space $(\sS,\sC)$ is not deformed.

\vspace{1ex}
\noindent 
{\bf Deformation of the twistor space.} \ 
Recall that the map $\pi:\sT \to \sS$ is 
considered as a trivial $\C$-bundle, 
and its restriction $\sT|_{\sC}$ is trivialized 
as $\sC\times \C \overset\sim \to \sT|_{\sC}$ 
by 
\begin{equation} \label{eq:trivialization_of_sT}
 (e^{i\theta},v ;\nu) \longmapsto 
 \left[ e^{-\frac{i\theta}{2}} :e^{\frac{i\theta}{2}} : 
   e^{\frac{i\theta}{2}}(v-i\nu) : 
   e^{-\frac{i\theta}{2}}(v+i\nu) \right]. 
\end{equation}
By this notation, the real twistor space $\sT_\R$ is written as 
$$\sT_\R= \{ (e^{i\theta},v;\nu) \in \sC\times \R \}
 = \left\{ \left. (e^{i\theta},v;\nu) \in \sT|_{\sC} \ \right| \ 
   \Imag(\nu)=0 \right\}. $$ 
Now, for each smooth function $h(e^{i\theta},v)\in C^\infty(\sC)$, 
we define a deformation $\sP_h$ 
of $\sT_\R$ by 
\begin{equation} \label{eq:P_h} 
 \sP_h= \left\{ \left. (e^{i\theta},v;\nu)\in \sT|_{\sC} \ \right| \ 
   \Imag(\nu)= h(e^{i\theta},v) \right\}. 
\end{equation}
Notice that $\sP_0=\sT_\R$ and that all the $\R$-invariant deformation 
of the twistor space fixing the quotient space $(\sS,\sC)$ 
are written in this way. 

Recall that we defined the integral transform 
$R: C^\infty(\sC) \to C^\infty(\R^{1,2})$ in \eqref{eq:transform}. 
Let $\check d$ and $\check *$ be the exterior derivative
and the Hodge's operator along the $\R^2$-direction of 
$(t,x)\in \R\times \R^2= \R^{1,2}$, that is, 
$$ \begin{aligned}
 \check d u &= \PD{u}{x_1} dx_1 + \PD{u}{x_2} dx_2, \\  
 \check * \, dx_1 &= dx_2, \qquad \check * \, dx_2= -dx_1 
  \quad \text{and so on}.
 \end{aligned}$$ 
In this section, we will show the following. 

\begin{Thm} \label{Thm:main}
For each $h\in C^\infty(\sC)$, 
there is a smooth $\C^2$-family of holomorphic disks on 
$(\sT,\sP_h)$. 
Moreover if $\del_t Rh<1$, a natural self-dual conformal structure on 
the parameter space $\C^2=\R\times \R^{1,2}$ is induced 
and is represented by the metric 
\begin{equation} \label{eq:monopole_metric}
 g_{(V,A)} = -V^{-1}(ds+A)^2+ V \underline g 
\end{equation} 
where $(V,A)$ is the pair of the function $V\in C^\infty(\R^{1,2})$ 
and the 1-form $A\in \Omega^1(\R^{1,2})$ defined by 
\begin{equation} \label{eq:h=>monopole}
 V= 1- \del_t Rh \qquad  \text{and} \qquad 
 A= \check * \, \check d Rh. 
\end{equation} 
\end{Thm}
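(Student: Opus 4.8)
The plan is to produce the family of holomorphic disks on $(\sT,\sP_h)$ by deforming the standard disks $D_{(a,b)}$ of \eqref{eq:Disk}, then to identify the induced conformal structure via its $\b$-surfaces. First I would work in the trivialization \eqref{eq:trivialization_of_sT} of $\sT|_{\sC}$, in which the boundary constraint $\sP_h$ is simply $\Imag\nu = h(e^{i\theta},v)$. A candidate disk through a point of $\R^{2,2}=\C^2$ should agree with the undeformed picture at first order: in the $(e^{i\theta},v;\nu)$-coordinates on $\sT|_{\sC}$, the undeformed disk $D_{(a,b)}$ meets $\sC$ along the planar circle $v=t+\Real(ze^{-i\theta})$ with $\nu$ real. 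To deform, I would seek, for each $(a,b)$, a holomorphic function $\nu=\nu(\omega)$ on $\bD$ whose boundary values satisfy $\Imag\nu(e^{i\theta}) = h\big(e^{i\theta}, t+\Real(ze^{-i\theta})\big)$; such a $\nu$ exists and is unique up to an additive real constant by the classical Schwarz/Poisson–Hilbert construction, the real constant being absorbed into the $\R$-action parameter $s=\Imag a$. Extending each such boundary datum to a holomorphic disk inside $\sT$ along the fibers of $\pi$ (which are affine $\C$-lines) produces the desired $\C^2$-family; checking smoothness in $(a,b)$ is routine since the Poisson/Hilbert operators depend smoothly on the planar-circle data.

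Next I would verify conditions A1–A3 for this family, so that Proposition \ref{Prop:recovering_thm_SD} applies and delivers a self-dual conformal structure on $\C^2$. Here the honest work is A3, the statement that each deformed disk still has partial indices $(1,1)$: since the partial indices are invariant under small deformations of a disk with all indices equal (the bundle $\cO(1)\oplus\cO(1)$ is rigid), and $\del_t Rh<1$ is exactly the openness/non-degeneracy hypothesis that keeps us in the regime where the deformation is ``small enough'' fiberwise, A3 should follow by a continuity/stability argument starting from the undeformed case treated in Section \ref{Section:model}. Conditions A1 and A2 (full rank of $\Gf_*$ off $\cZ_\R$ and of $(\Gf_\R)_*$) reduce to the non-vanishing of the same Jacobian, which is governed by $V=1-\del_t Rh>0$.

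Having the conformal structure, I would pin it down by its $\b$-surfaces. By Proposition \ref{Prop:recovering_thm_SD}, for each $p=(e^{i\theta},v)\in\sC$ (equivalently each point of $\sP_h$ over it) the set $\GS_p=\{(a,b)\mid p\in\del D_{(a,b)}\}$ is a $\b$-surface; since the boundary circles of the deformed disks are still the planar circles $C_{(t,x)}$ (the deformation only changes the $\nu$-coordinate, i.e.\ how the disk sits over the fixed boundary circle), the incidence condition $p\in\del D_{(a,b)}$ is still $v=t+\langle\omega,x\rangle$ together with the now-$h$-dependent relation between $s=\Imag a$ and the other variables coming from $\Imag\nu(e^{i\theta})=h$. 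I would then write out the tangent plane to $\GS_p$ in coordinates $(s,t,x)$ on $\R\times\R^{1,2}=\C^2$, and check that the bivector it spans is null and anti-self-dual precisely for the metric \eqref{eq:monopole_metric} with $V$ and $A$ as in \eqref{eq:h=>monopole}. The cleanest way is to recognize this as an instance of the Jones–Tod ansatz: $(V,A)$ must solve the (abelian) monopole equation $\check * \check d V = -dA$ on $\R^{1,2}$ — equivalently $\Box Rh=0$ — which holds because $u=Rh$ solves the wave equation \eqref{eq:wave_eq} by the defining property of $R$ established in Section \ref{Section:circles}; and for any solution of the monopole equation on a flat Einstein–Weyl $3$-space the Jones–Tod construction \cite{bib:JT} yields exactly the self-dual metric $-V^{-1}(ds+A)^2+V\underline g$ with the $\b$-surfaces as above. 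Matching the two descriptions of the $\b$-surfaces then forces the constants and the identification $V=1-\del_t Rh$, $A=\check*\check d Rh$.

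The main obstacle I expect is A3 — controlling the partial indices of the deformed disks globally, not just infinitesimally — since the deformation $\sP_h$ of the totally real submanifold is not assumed small and $h$ is merely smooth; the hypothesis $\del_t Rh<1$ is what must be leveraged (via the positivity of $V$, hence non-degeneracy of the relevant Jacobian along the whole family) to guarantee that the disks persist as embedded holomorphic disks with indices $(1,1)$ rather than jumping. The remaining steps — constructing $\nu(\omega)$ by Poisson/Hilbert, checking smoothness in parameters, and the $\b$-surface/Jones–Tod bookkeeping — are routine once that point is secured.
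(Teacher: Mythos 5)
Your proposal follows essentially the same route as the paper. The disk construction is identical in substance: your Schwarz/Poisson--Hilbert problem (find holomorphic $\nu(\omega)$ on $\bD$ with $\Imag\nu(e^{i\theta})=h(e^{i\theta},t+\Real(ze^{-i\theta}))$, unique up to an additive real constant absorbed into $s=\Imag a$) is exactly what the paper solves by Fourier expansion, arriving at $\tilde\kappa=s+i(H_+-H_-)$; and the mean value $H_0=Rh$ is how the transform $R$ enters in both treatments. The only genuine divergence is in how the metric is pinned down: the paper computes the null distribution $\zD=\langle\Gm_1,\Gm_2\rangle$ explicitly (Proposition \ref{Prop:deformed_zD}) and verifies $g_{(V,A)}(\Gm_j,\Gm_k)=0$ by direct calculation, whereas you propose to invoke the Jones--Tod ansatz and match $\b$-surfaces; both work, but the direct check is what actually forces the normalizations $V=1-u_t$, $A=\check*\,\check d u$ without appeal to an external uniqueness statement. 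One further remark: you flag A3 (persistence of partial indices $(1,1)$ under a non-small deformation of $\sP_h$) as the main obstacle, and you are right to be uneasy --- the paper disposes of it in one line (``A3 always holds since $\zE$ is obtained as a continuous deformation from the standard model''), which uses only openness of the balanced splitting type and does not by itself exclude index jumping somewhere along the path $\sP_{\tau h}$, $\tau\in[0,1]$. So on this point your plan is no weaker than the published argument; if anything it is more candid about where the real work lies.
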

Though the self-duality of $g_{(V,A)}$ defined above is deduced 
from the twistor construction, we can also check it directly in the 
following way. 
First notice that the metric of the form 
\eqref{eq:monopole_metric} is well studied, 
and the following proposition holds 
(see \cite{bib:Kamada02,bib:Kamada05,bib:LeBrun91,bib:NakataTran}). 
\begin{Prop} \label{Prop:SD<=>monopole}
The metric $g_{(V,A)}$ defined by \eqref{eq:monopole_metric}
is self-dual if and only if 
$(V,A)$ satisfies the non-degeneracy condition $V>0$ and 
the monopole equation $*dV=dA$. 
\end{Prop}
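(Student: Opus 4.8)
\medskip
\noindent\textbf{Proof strategy.}
The plan is to verify the equivalence by a direct moving--frame computation of the Weyl curvature of $g_{(V,A)}$, keeping the indefinite signature carefully in mind. First note that for $g_{(V,A)}$ in \eqref{eq:monopole_metric} to be a nondegenerate metric at all, $V$ must be nowhere zero; up to reversing the orientation of the $\R$--factor we may assume $V>0$, and with the orientation fixed by the twistor construction of Section \ref{Section:method} this is exactly the branch on which ``self--dual'' (rather than anti--self--dual) is the natural condition. Thus the content is that, for $V>0$, self--duality of $g_{(V,A)}$ is equivalent to $*dV=dA$. I would introduce the pseudo--orthonormal coframe
\[
 e^{0}= V^{-1/2}(ds+A),\qquad e^{i}=V^{1/2}\epsilon^{i}\quad(i=1,2,3),
\]
where $\epsilon^{1}=dt,\ \epsilon^{2}=dx_{1},\ \epsilon^{3}=dx_{2}$ is the standard coframe of $\underline g$ in \eqref{eq:(1,2)metric}, so that $g_{(V,A)}=-(e^{0})^{2}-(e^{1})^{2}+(e^{2})^{2}+(e^{3})^{2}$.

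Next I would compute the differentials: writing $F:=dA$ and expressing $dV$ and $F$ in the frame $\{\epsilon^{i}\}$ one gets $de^{i}=\tfrac12 V^{-1}dV\wedge e^{i}$ and $de^{0}=\tfrac12 V^{-1}e^{0}\wedge dV+V^{-1/2}F$, whence the Levi--Civita connection $1$--forms $\omega^{a}{}_{b}$ (indices moved by $\diag(-1,-1,1,1)$) follow from the first structure equations as universal linear expressions in $V^{-1}dV$ and $V^{-3/2}F$. Substituting into the second structure equations and removing the Ricci and scalar parts leaves the Weyl operator $W\colon\Lambda^{2}\to\Lambda^{2}$, which in dimension four is block diagonal with respect to the splitting $\Lambda^{2}=\Lambda^{+}\oplus\Lambda^{-}$ determined by the orientation above; the metric is self--dual precisely when $W$ annihilates the bundle $\Lambda^{-}$. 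The point I expect to fall out of the computation is that the $\Lambda^{-}$--part of $W$ is a fixed nonzero multiple of the $2$--form $F-*dV$ on $\R^{1,2}$ together with its contractions, while the $\Lambda^{+}$--part involves $F+*dV$ and the Hessian of $V$; hence $W|_{\Lambda^{-}}=0$ if and only if $F=*dV$, i.e.\ the monopole equation. (The identity $dF=0$, equivalently $d*dV=0$, is automatic and enters only as a consistency check.)

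The ``if'' direction also has a conceptual proof via the Jones--Tod correspondence \cite{bib:JT}: the flat Lorentz metric $\underline g$ is Einstein--Weyl with its Levi--Civita connection, and for this flat background the generalized monopole equation degenerates exactly to $*dV=dA$, so a solution of the latter yields a self--dual $g_{(V,A)}$; alternatively one can exhibit on $(M,g_{(V,A)})$ a parallel triple of null $2$--forms built from $e^{0}\wedge e^{i}\mp e^{j}\wedge e^{k}$, i.e.\ a pseudo--hyperk\"ahler structure, directly from $*dV=dA$. Either way the main obstacle is not conceptual but bookkeeping: organizing the indefinite--signature Hodge computation --- identifying which wedge combinations span $\Lambda^{\pm}$ and tracking all signs --- so that the combination isolated by the vanishing of $W|_{\Lambda^{-}}$ is precisely $dA-*dV$ and not one of the identities that hold for every $(V,A)$. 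This is the same type of computation carried out for closely related ans\"atze in \cite{bib:Kamada02,bib:Kamada05,bib:LeBrun91,bib:NakataTran}, and I would follow those references.
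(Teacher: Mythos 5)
Your outline is essentially the standard argument, and it is worth noting that the paper itself gives no proof of this proposition at all: it is quoted as a known fact with references to \cite{bib:Kamada02,bib:Kamada05,bib:LeBrun91,bib:NakataTran}, and the moving-frame computation you describe (or its Jones--Tod formulation \cite{bib:JT} specialized to the flat Einstein--Weyl base $\R^{1,2}$) is exactly what those sources carry out. Your coframe and the formulas for $de^{0}$ and $de^{i}$ are correct, and of your two routes for the ``if'' direction the cleanest is the last one you mention: the three $2$-forms of the type $e^{0}\wedge e^{i}\mp e^{j}\wedge e^{k}$ spanning one summand of $\Lambda^{2}=\Lambda^{+}\oplus\Lambda^{-}$ are simultaneously closed precisely when $dA=*dV$, which gives a pseudo-hyperk\"ahler structure and hence the vanishing of the corresponding half of the Weyl curvature. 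Two points want tightening. First, ``reversing the orientation of the $\R$-factor'' does not reduce you to $V>0$: the substitution $s\mapsto -s$ sends $A$ to $-A$ but leaves $V$ unchanged, and a nowhere-vanishing $V<0$ still yields a nondegenerate neutral metric, with $g_{(V,A)}=-g_{(-V,A)}$; the condition $V>0$ is simply part of the statement (the paper's ``non-degeneracy condition''), so you need only prove the equivalence on that branch. Second, $W|_{\Lambda^{-}}$ has five independent components while $dA-*dV$ has only three, so it cannot literally be ``a fixed nonzero multiple'' of that $2$-form; the only-if direction requires verifying that all five components are generated by $dA-*dV$ and its first derivatives in such a way that $W|_{\Lambda^{-}}=0$ forces $dA=*dV$ --- this is precisely the bookkeeping you correctly anticipate but do not carry out, and for it you would indeed have to follow \cite{bib:JT} or \cite{bib:LeBrun91}.
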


On the other hand, the following proposition is easily checked. 
\begin{Prop} \label{eq:monopole_potential}
For a function $u\in C^\infty(\R^{1,2})$, let us define 
\begin{equation} 
 V= 1- \del_t u \qquad  \text{and} \qquad 
 A= \check * \, \check d u. 
\end{equation} 
Then $(V,A)$ solves the monopole equation $*dV=dA$ 
if and only if $u$ solves the wave equation $\Box u=0$. 
\end{Prop}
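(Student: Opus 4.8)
The plan is to verify the equivalence by a direct computation in the coordinates $(t,x_1,x_2)$ of $\R^{1,2}$, comparing the two $2$-forms $*dV$ and $dA$ component by component. First I would record the ingredients explicitly. From $A=\check *\,\check d u$ one gets $A = \del_{x_1}u\,dx_2 - \del_{x_2}u\,dx_1$, and differentiating (using that mixed partials commute, so the purely spatial cross terms cancel while the $dt\wedge dx_i$ terms survive),
\[
 dA = \big(\del_{x_1}^2 u+\del_{x_2}^2 u\big)\,dx_1\wedge dx_2
   + \del_t\del_{x_1}u\,dt\wedge dx_2 - \del_t\del_{x_2}u\,dt\wedge dx_1 .
\]
On the other side, $dV = -d(\del_t u) = -\del_t^2 u\,dt - \del_t\del_{x_1}u\,dx_1 - \del_t\del_{x_2}u\,dx_2$.

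Next I would fix the Hodge operator $*$ on $(\R^{1,2},\underline g)$ with $\underline g=-dt^2+dx_1^2+dx_2^2$ and orientation $dt\wedge dx_1\wedge dx_2$, which on $1$-forms gives $*dt=-dx_1\wedge dx_2$, $*dx_1=-dt\wedge dx_2$, $*dx_2=dt\wedge dx_1$. Applying this to $dV$ yields
\[
 *dV = \del_t^2 u\,dx_1\wedge dx_2 + \del_t\del_{x_1}u\,dt\wedge dx_2 - \del_t\del_{x_2}u\,dt\wedge dx_1 .
\]
Comparing with $dA$, the two $dt\wedge dx_i$ components agree identically, so the monopole equation $*dV=dA$ reduces to the single scalar condition coming from the $dx_1\wedge dx_2$ coefficient, namely $\del_t^2 u - \del_{x_1}^2 u - \del_{x_2}^2 u = 0$, which by the sign convention of \eqref{eq:wave_eq} is exactly $\Box u = 0$. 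Thus $*dV-dA=(\Box u)\,dx_1\wedge dx_2$, and the claimed equivalence follows.

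The argument is entirely routine; there is no real obstacle, only the bookkeeping of signs in the Lorentzian Hodge star, together with the observation that for this particular ansatz the a priori vector equation $*dV=dA$ collapses to one scalar equation because its $dt$-components hold automatically. If one prefers a coordinate-free presentation, the same conclusion can be obtained by splitting all forms into their $dt$-part and their spatial part and tracking how $\check d$, $\check *$ and $*$ interact, but the coordinate check above is the shortest route.
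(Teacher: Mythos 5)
Your computation is correct, and it is exactly the direct verification the paper has in mind: the paper states that this proposition "is easily checked" and omits the details, which are precisely your component-by-component comparison of $*dV$ and $dA$. The one point worth being explicit about (as you are) is the choice of orientation and sign convention for the Lorentzian Hodge star, since with the opposite convention the $dt\wedge dx_i$ components would not cancel automatically; with your convention the equation correctly collapses to the single scalar identity $*dV-dA=(\Box u)\,dx_1\wedge dx_2$.
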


Since $u=Rh$ solves the wave equation $\Box u=0$, we 
obtain the following. 
\begin{Cor}
 The metric $g_{(V,A)}$ on $\R^4$ induced from a function 
 $h \in C^\infty(\sC)$ by \eqref{eq:monopole_metric} and 
 \eqref{eq:h=>monopole} is self-dual. 
\end{Cor}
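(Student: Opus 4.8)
The plan is to obtain the Corollary as a direct formal consequence of the two propositions preceding it, together with the defining property of the integral transform $R$. Setting $u = Rh$, the metric under consideration is precisely $g_{(V,A)}$ of \eqref{eq:monopole_metric} with $V = 1 - \del_t u$ and $A = \check * \, \check d u$, so the hypotheses of Proposition \ref{eq:monopole_potential} are met verbatim, provided I know that $u$ solves the wave equation. Thus the entire argument reduces to a single substantive input, namely $\Box(Rh) = 0$, after which the propositions chain mechanically.

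First I would record why $u = Rh$ satisfies $\Box u = 0$; this is the significant property of $R$ announced in Section \ref{Section:circles}, which I may simply cite, but it is worth sketching since it is the only real content. Writing $\varphi(t,x,\theta) = t + \<\omega,x\> = t + x_1\cos\theta + x_2\sin\theta$ with $\omega = e^{i\theta}$, one differentiates \eqref{eq:transform} under the integral sign. Since $\del_t\varphi = 1$, $\del_{x_1}\varphi = \cos\theta$ and $\del_{x_2}\varphi = \sin\theta$, the second derivatives give $\del_t^2 u = \frac{1}{2\pi}\int_{|\omega|=1} h_{vv}\, d\theta$ and $\del_{x_1}^2 u + \del_{x_2}^2 u = \frac{1}{2\pi}\int_{|\omega|=1} h_{vv}\,(\cos^2\theta + \sin^2\theta)\, d\theta$, where $h_{vv}$ is the second $v$-derivative of $h$ evaluated at $(\omega,\varphi)$. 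The identity $\cos^2\theta + \sin^2\theta = 1$ then yields $\del_{x_1}^2 u + \del_{x_2}^2 u = \del_t^2 u$, i.e. $\Box u = 0$. I would note that the integration runs over the compact circle $|\omega| = 1$ for each fixed $(t,x)$, so differentiation under the integral is unproblematic for any $h \in C^\infty(\sC)$; the rapid-decrease condition $\cS(\sC)$ was needed only for invertibility of $R$, not for the wave equation.

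With $\Box u = 0$ in hand, Proposition \ref{eq:monopole_potential} immediately gives that $(V,A) = (1 - \del_t u,\ \check * \, \check d u)$ solves the monopole equation $*dV = dA$. Finally I would invoke Proposition \ref{Prop:SD<=>monopole}: on the locus where the non-degeneracy condition $V > 0$ holds — which is exactly the region $\del_t Rh < 1$ singled out in Theorem \ref{Thm:main} — the metric $g_{(V,A)}$ is self-dual precisely because the monopole equation is satisfied.

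There is no genuine analytic obstacle here; the statement is a routine consequence of the two cited propositions. The only point demanding care is the sign/non-degeneracy hypothesis: Proposition \ref{Prop:SD<=>monopole} couples self-duality with $V > 0$, so strictly the conclusion is self-duality on the open set $\{V > 0\}$, which is all of $\R^4$ under the standing assumption $\del_t Rh < 1$. I would therefore read the Corollary as asserting self-duality wherever $g_{(V,A)}$ is a genuine metric of the correct signature, emphasizing that the monopole equation — the curvature-level content — holds identically regardless of the sign of $V$.
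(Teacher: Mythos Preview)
Your proof is correct and follows exactly the paper's own approach: the sentence preceding the Corollary reads ``Since $u=Rh$ solves the wave equation $\Box u=0$, we obtain the following,'' i.e.\ the same chain $\Box(Rh)=0 \Rightarrow$ Proposition~\ref{eq:monopole_potential} $\Rightarrow$ Proposition~\ref{Prop:SD<=>monopole}. Your added verification that $\Box(Rh)=0$ and your remark about the non-degeneracy hypothesis $V>0$ are both accurate and simply make explicit what the paper leaves implicit.
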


\vspace{1ex}
\noindent 
{\bf Deformation of the holomorphic disks.} \ 
In this part, we construct a family of holomorphic disks on 
$(\sT,\sP_h)$. 
Recall that we have a family of 
holomorphic disks $\{\underline D_{(t,z)}\}_{(t,z)\in \R^{1,2}}$ 
on $(\sS,\sC)$ as in \eqref{eq:underline_disk}. 

\begin{Prop}
 For each $(t,z)\in \R^{1,2}$, 
 there is an $\R$-family of holomorphic disks 
 $\{\cD_{(s,t,z)}\}_{s\in \R}$ on $(\sT,\sP_h)$ 
 satisfying $\pi(\cD_{(s,t,z)})= \underline D_{(t,z)}$. 
\end{Prop}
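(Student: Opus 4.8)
The plan is to lift each holomorphic disk $\underline D_{(t,z)}$ on $(\sS,\sC)$ to a holomorphic disk on $(\sT,\sP_h)$ using the $\C$-bundle structure $\pi:\sT\to\sS$. Since $\pi$ is a trivial $\C$-bundle, a section of $\sT$ over $\underline D_{(t,z)}$ is described by a single complex-valued function $\nu=\nu(\omega)$ on $\bD$ via the trivialization \eqref{eq:trivialization_of_sT} (extended holomorphically off $\sC$). The disk $\underline D_{(t,z)}$ is holomorphic in $\omega$, so the lift will be a holomorphic disk precisely when $\nu(\omega)$ extends holomorphically to the interior of $\bD$; the boundary condition $\partial\cD_{(s,t,z)}\subset\sP_h$ forces, along $|\omega|=1$, the relation $\Imag\,\nu(e^{i\theta})=h\bigl(e^{i\theta},\,v(e^{i\theta})\bigr)$ where $v=t+\Real(z e^{-i\theta})$ is the $v$-coordinate of the point of $\underline D_{(t,z)}$ over $e^{i\theta}$ (compare \eqref{eq:planar_circle2}). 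Thus the task reduces to a classical Riemann--Hilbert / Schwarz problem: find a function holomorphic on the open disk whose boundary imaginary part is the prescribed real function $\theta\mapsto h(e^{i\theta},t+\Real(z e^{-i\theta}))\in C^\infty(S^1)$.

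First I would write down the explicit solution of this scalar boundary value problem. Given $\psi(\theta)\in C^\infty(S^1)$ with Fourier expansion $\psi=\sum_k c_k e^{ik\theta}$, the holomorphic function $F$ on $\bD$ with $\Imag F|_{\del\bD}=\psi$ is unique up to an additive real constant and is given by the Schwarz-type integral $F(\omega)=\frac{1}{2\pi}\int_{|\zeta|=1}\frac{\zeta+\omega}{\zeta-\omega}\,\psi(\zeta)\,\frac{d\zeta}{i\zeta}+ i\,(\text{real const})$; equivalently $F(\omega)=c_0+2i\sum_{k\ge1}\bar c_{k}\omega^{k}$ up to the real ambiguity, using $c_{-k}=\bar c_k$. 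The real constant of integration is exactly the parameter $s\in\R$ in the statement: different choices of $s$ translate the lifted disk along the $\C$-orbit (the $\R$-action of Section \ref{Section:model}), so the freedom in the Schwarz problem is precisely what produces the claimed $\R$-family $\{\cD_{(s,t,z)}\}_{s\in\R}$. Having $\nu(\omega)$, I would define
\begin{equation*}
 \cD_{(s,t,z)}=\left\{\left(e^{i\theta}\cdot,\ \nu(\omega)\right)\in\sT\ \middle|\ \omega\in\bD\right\}
\end{equation*}
pulled back through \eqref{eq:trivialization_of_sT}, i.e. the image in $\sT$ of $\omega\mapsto[\,1:\omega:\ \bar z/2\cdot\text{(...)}:\ \text{(...)}\,]$ suitably modified by $\nu$; concretely it will be a small deformation of $D_{(a,b)}$ from \eqref{eq:Disk} with $a=s+it$ replaced appropriately. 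Then I would verify (i) $\cD_{(s,t,z)}$ is a holomorphic disk: it is the image of a map holomorphic on the interior of $\bD$ and continuous up to $\del\bD$, since $\nu$ is; (ii) its boundary lies on $\sP_h$, by construction of $\nu$ via the boundary condition $\Imag\nu=h$; and (iii) $\pi(\cD_{(s,t,z)})=\underline D_{(t,z)}$, which is immediate because applying $\pi$ kills the fiber coordinate $\nu$ and returns the base disk.

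The main obstacle I anticipate is not existence of the lift — the Schwarz problem always has a solution — but checking that the construction is genuinely smooth in all parameters $(s,t,z)$ and yields a family of the same qualitative type (so that in the next stage one can verify conditions A1--A3 and recover the metric \eqref{eq:monopole_metric}). Smoothness in $(t,z)$ is delicate because $\nu$ depends on $h$ evaluated along the moving circle $C_{(t,z)}\subset\sC$, so one must differentiate under the Schwarz integral and control the $\theta$-dependence; this is where the integral transform $R$ of \eqref{eq:transform} enters, since the $k=0$ Fourier mode of $\theta\mapsto h(e^{i\theta},t+\Real(z e^{-i\theta}))$ is exactly $Rh(t,x)$, and the smooth dependence of the whole Fourier series on $(t,z)$ follows from smoothness of $h$. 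A secondary point to record is that the $\R$-action $\nu\cdot(a,b)$ from Section \ref{Section:model} intertwines with the family, i.e. $s\mapsto\cD_{(s,t,z)}$ is an orbit of that action lifted to $\sP_h$ (the deformed real locus is $\R$-invariant by construction \eqref{eq:P_h}), which makes the "$\R$-family" statement precise and sets up the $\R$-quotient picture used later. Apart from these bookkeeping matters, the argument is a direct application of the scalar Riemann--Hilbert formula fiberwise over $\underline D_{(t,z)}$.
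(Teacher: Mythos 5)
Your proposal is correct and follows essentially the same route as the paper: the paper likewise reduces the lifting problem to finding a holomorphic function on the disk with prescribed boundary imaginary part $H(t,z;e^{i\theta})=h(e^{i\theta},t+\Real(ze^{-i\theta}))$ (its substitution $\tilde\kappa=\kappa+\Imag(z\omega^{-1})$ is exactly your passage to a holomorphic base section of the $\C$-bundle, which is legitimate since the two base sections differ by a \emph{real} fiber translation), and it solves the resulting Schwarz problem by Fourier expansion rather than by the Schwarz integral, with the additive real constant $s$ producing the $\R$-family. The only issues are bookkeeping: the Schwarz kernel as you wrote it prescribes the \emph{real} part on the boundary, so you need an extra factor of $i$, and the residual ambiguity is then an additive real constant (not $i$ times one), consistent with your identification of $s$.
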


\begin{proof}
First, we lift the boundary circle 
$C_{(t,z)}= \del \underline D_{(t,z)}$ 
to the real twistor space $\sP_h$. 
Recall that $C_{(t,z)}$ can be written as \eqref{eq:planar_circle2}. 
By the description \eqref{eq:P_h} of $\sP_h$, any lift of $C_{(t,z)}$ 
on $\sP_h$ can be written as 
 \begin{equation} \label{eq:unknown_lifted_circle}
  \left\{ \left. \left( e^{i\theta},t+\Real(ze^{-i\theta}), 
  \kappa(e^{i\theta})+ i H(t,z;e^{i\theta}) \right)
  \in \sP_h 
  \ \right| \ e^{i\theta}\in S^1 \right\} 
 \end{equation}
by using unknown real valued function $\kappa(e^{i\theta}) \in C^\infty(S^1)$ and the function 
$$H(t,z;e^{i\theta})= h( e^{i\theta},t+\Real(ze^{-i\theta})). $$
Now we determine $\kappa(e^{i\theta})$ so that 
the circle \eqref{eq:unknown_lifted_circle} extends holomorphically 
to $|\omega|< 1$ by putting $\omega=e^{i\theta}$. 
By using the trivialization \eqref{eq:trivialization_of_sT}, 
this condition is satisfied if and only if the following two functions 
on $S^1=\{|\omega|=1\}$ extends holomorphically to $|\omega|\le 1$: 
 $$ \omega\left(\Real(z\omega^{-1}) 
  -i \kappa(\omega) + H(t,z;\omega) \right) \qquad \text{and} \qquad 
  \Real(z\omega^{-1}) 
  + i \kappa(\omega) - H(t,z;\omega). $$ 
If we put 
$\tilde\kappa(\omega)=  \kappa(\omega)+ \Imag(z\omega^{-1})$, 
these functions are written as 
\begin{equation} \label{eq:holomorphication}
 z-\omega(i\tilde\kappa(\omega) + H(t,z;\omega)) \qquad \text{and} 
 \qquad 
  \bar{z} \omega+i\tilde\kappa(\omega) - i H(t,z;\omega). 
\end{equation}
Now let us take the Fourier expansion 
\begin{equation} \label{eq:expansion}
 H(t,z;\omega) = \sum_{k=-\infty}^{\infty} H_k(t,z) \omega^k. 
\end{equation}
Since $H$ is real valued, we have $\overline{H_k(t,z)}= H_{-k}(t,z)$ 
for each integer $k\in\Z$. Let us put 
\begin{equation} 
	H_\pm(t,z;\omega) 
	= \sum_{k=1}^{\infty} H_{\pm k}(t,z) \omega^{\pm k}, \qquad 
	u(t,z) = H_0(t,z). 
\end{equation}
Then the functions \eqref{eq:holomorphication} extend holomorphically 
to $|\omega|\le 1$ if and only if $\tilde \kappa(\omega)$ can be 
written as 
 $$ \tilde\kappa(\omega)= 
   s + i (H_+(t,z;\omega)-H_-(t,z;\omega)) $$
for some real constant $s$. 
Hence we obtain $\R$-family of holomorphic disks 
$\{\cD_{(s,t,z)}\}_{s\in\R}$ of which the boundary 
$\del \cD_{(s,t,z)}$ is written as 
 \begin{equation} \label{eq:lifted_circle}
  \left\{ \left. \left( e^{i\theta},t+\Real(ze^{-i\theta}), 
  s- \Imag (ze^{-i\theta}) + i \eta \right)
  \in \sP_h 
  \ \right| \ e^{i\theta}\in S^1 \right\} 
 \end{equation}
where $\eta=\eta(t,z;\omega)= u(t,z)+ 2H_+(t,z;\omega)$. 
By the trivialization \eqref{eq:trivialization_of_sT}, we obtain 
\begin{equation}
  \cD_{(s,t,z)} = \left\{ \left. 
 [1: \omega : (t-is+\eta)\omega + z 
  : t+is -\eta + \bar{z}\omega ] \in \sT \ \hasira \right| \ 
 \omega \in \bD \right\}. 
\end{equation}
\end{proof}

Notice that the constant part $u$ of the Fourier expansion of $H$ is 
written as 
$$ u(t,z) = H_0(t,z) 
 = \frac{1}{2\pi} \int_0^{2\pi} H(t,z;e^{i\theta}) d\theta 
 = Rh(t,z) $$
by using the integral transform $R$ defined in \eqref{eq:transform}. 

By the above Proposition and the proof, 
we obtain a family of holomorphic disks $\{ \cD_{(s,t,z)}\}$ 
on $(\sT,\sP_h)$ parametrized by $(a,b)=(t+is,z)\in \C^2$. 
We write the parameter space by $M = \{ (a,b)\in \C^2\}$ 
and we write $\cD_{(a,b)}= \cD_{(s,t,z)}$ when $(a,b)=(t+is,z)$. 
This family $\{\cD_{(a,b)}\}$ is described by the following diagram: 
$$ \xymatrix{ 
  & (\cZ,\cZ_\R) \ar[ld]_{\Gp} \ar[rd]^{\Gf} \ar[d]^\Pi & \\ 
  M \ar[d]^\varpi 
   & (\cW,\cW_\R) \ar[ld]_{\underline \Gp} \ar[rd]^{\underline \Gf} & 
   (\sT, \sP) \ar[d]^\pi \\ 
 \R^{1,2} && (\sS,\sC) 
} $$
Here $\Gp : (\cZ,\cZ_\R) \to M$ is a trivial $(\bD,\del\bD)$-bundle, 
and the map $\Gf$ is defined by 
$$\Gf(a,b;\omega)= 
 [1: \omega : \bar a \omega + b +\eta(\Real a,b;\omega) 
  : \bar{b}\omega + a -\eta(\Real a,b;\omega)].$$ 
Notice that $\Pi:(\cZ,\cZ_\R)\to (\cW,\cW_\R)$ is the 
trivial $\R$-bundle with fiber coordinate $s\in \R$. 
The holomorphic disks are given by 
$\cD_{(a,b)}=\Gf(\Gp^{-1}(a,b))$.

\vspace{1ex}
\noindent 
{\bf Self-dual metrics and the monopoles.} \
In this part, we determine the induced 
self-dual conformal structure on $M$ 
in the sense of Proposition \ref{Prop:recovering_thm_SD}. 
First, the distribution $\zD= \ker \{\Gf_*:T\cZ_\R\to T\sP_h\}$
on $\cZ_\R$ is determined in the following way. 
Here $\underline \Gm_1$ and $\underline \Gm_2$ are 
as \eqref{eq:underline_Gm}.

\begin{Prop} \label{Prop:deformed_zD}
The distribution $\zD\otimes \C$ is spanned by 
complex tangent vector fields 
\begin{equation} \label{eq:deformed_Gm}
	\Gm_1= \underline\Gm_1 - i(1- u_t- 2 \omega u_z)\pd{s} 
	\qquad \text{and}  \qquad 
	\Gm_2= \underline\Gm_2 + i(\omega(1-u_t)-2u_{\bar z}) \pd{s}.
\end{equation}
\end{Prop}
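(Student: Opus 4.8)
The plan is to compute the differential $\Gf_*$ explicitly along $\cZ_\R$ and extract its kernel. Recall from the construction that $(\cZ,\cZ_\R)$ is the trivial $\R$-bundle over $(\cW,\cW_\R)$ with fiber coordinate $s=\Imag a$, that $\Gf(a,b;\omega)=[1:\omega:\bar a\omega+b+\eta:\bar b\omega+a-\eta]$ with $\eta=\eta(\Real a,b;\omega)=u+2H_+$, and that $\underline\Gf$ is the corresponding map into $(\sS,\sC)$. Since $\pi\circ\Gf=\underline\Gf\circ\Pi$ and $\Pi_*\Gm_i=\underline\Gm_i$, the vectors $\Gm_i$ we seek must project to $\underline\Gm_i$; hence they have the form $\Gm_i=\underline\Gm_i+c_i\,\del/\del s$ for scalar functions $c_i$, and the whole content is to determine $c_1,c_2$. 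First I would write $\underline\Gf$ in the trivialization \eqref{eq:trivialization_of_sT} of $\sS$ (weighted coordinate $[1:\omega;\tfrac{z}{2}+t\omega+\tfrac{\bar z}{2}\omega^2]$) and confirm that $\underline\Gm_1,\underline\Gm_2$ from \eqref{eq:underline_Gm} indeed annihilate the $(1,0)$-part of $\underline\Gf_*$ — this is the $h=0$ computation already implicit in Section~\ref{Section:model}. Then the deformation only enters through $\eta$.

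Next I would apply $\Gf_*$ to a candidate vector $\underline\Gm_i+c_i\,\del/\del s$. The first two homogeneous components $[1:\omega:\cdots:\cdots]$ of $\Gf$ are unchanged, so the only new terms come from differentiating $y_2=\bar a\omega+b+\eta$ and $y_3=\bar b\omega+a-\eta$ along $\underline\Gm_i$ and along $\del/\del s$. Writing $a=t+is$, $b=z$, we have $\del_s=i\,\del_a-i\,\del_{\bar a}$ acting as $\pm i$ on the "$a$" and "$\bar a$" slots; and $\eta$ depends on $(t,z,\bar z)$ only (through $\Real a=t$), with $\eta_t=u_t+2(H_+)_t$, $\eta_z=u_z+2(H_+)_z$, $\eta_{\bar z}=u_{\bar z}+2(H_+)_{\bar z}$. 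The condition for the image to be purely $(0,1)$, i.e. for $\Gf_*^{1,0}(\underline\Gm_i+c_i\del_s)=0$, is two equations (one from the $y_2$-slot, one from the $y_3$-slot) in the single unknown $c_i$; consistency of these two is exactly the earlier fact that the disks $\cD_{(a,b)}$ are holomorphic, which is why a solution exists. Solving, the $H_+$-terms must cancel against each other (they are the holomorphic part that was built into $\eta$ precisely to make the lift holomorphic), leaving only the $u$-derivatives, and one reads off $c_1=-i(1-u_t-2\omega u_z)$ and $c_2=i(\omega(1-u_t)-2u_{\bar z})$.

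Finally, since $\zD\otimes\C=\ker\{\Gf_*^{1,0}:T_\C\cZ_\R\to T^{1,0}\sS\}$ is rank $2$ over $\C$ (by condition B2/A2, which holds here as asserted in the statement of Theorem~\ref{Thm:main}), and $\Gm_1,\Gm_2$ as above are two visibly $\C$-independent elements of this kernel (they differ in their $\underline\Gm$-parts, which are independent), they span it. This gives \eqref{eq:deformed_Gm}.

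The main obstacle I anticipate is purely bookkeeping: correctly differentiating the homogeneous coordinates of $\Gf$ through the non-holomorphic trivialization of $\sS$ (and the weight-$2$ scaling), and carefully separating the Fourier pieces of $H$ so that the $H_+$ contributions cancel and only $u=H_0=Rh$ survives. There is no conceptual difficulty — existence of the kernel and its rank are guaranteed by the holomorphicity of the disks already established in the preceding Proposition — so the proof is a direct verification that the stated $\Gm_1,\Gm_2$ lie in $\ker\Gf_*^{1,0}$; I would present it as exactly that computation rather than re-deriving the kernel from scratch.
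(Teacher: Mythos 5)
Your proposal follows essentially the same route as the paper: write $\Gm_j=\underline\Gm_j+c_j\,\del/\del s$ (justified by $\Pi_*(\zD)=\underline\zD$) and pin down $c_j$ by requiring the holomorphic coordinates of $\Gf$ to be annihilated, with the whole content residing in how $\eta$ differentiates. The one place the paper is more explicit than your sketch is the reduction of the $H_\pm$-contributions: it derives $-\del_t H_k+2\del_z H_{k-1}=0$ and $-\del_t H_k+2\del_{\bar z}H_{k+1}=0$ from $\underline\Gm_iH\equiv 0$ and uses them to get $\underline\Gm_1\eta=-u_t-2\omega u_z$ and $\underline\Gm_2\eta=\omega u_t+2u_{\bar z}$ --- note the $H_+$-terms do not simply cancel but telescope into $u$-derivative terms (e.g.\ $2\underline\Gm_1H_+=-4\omega u_z$), which is exactly the bookkeeping you defer and would need to carry out.
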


Notice that the relation \eqref{eq:relation_vectors} of the standard 
case is recovered when $h=0$, that is, when $u=Rh=0$. 

\begin{proof}[Proof of \ref{Prop:deformed_zD}]
By construction, we have $\Pi_*(\zD) = \underline \zD.$ 
Hence the complex distribution $\zD\otimes \C$ 
is spanned by the vectors of the form 
$$\Gm_j= \underline \Gm_j + \Ga_j \pd{s} \qquad (j=1,2).$$ 

Since the function $H(t,z;e^{i\theta})= h(e^{i\theta}, t+\Real(ze^{-i\theta}))$ 
is constant along $\Gm_1$ and $\Gm_2$, we obtain $\Gm_i H \equiv 0$ for $i=1,2$. 
By the expansion \eqref{eq:expansion}, we obtain 
$$ -\PD{H_k}{t} + 2 \PD{H_{k-1}}{z}=0, \qquad 
 -\PD{H_k}{t} + 2 \PD{H_{k+1}}{\bar z}=0. $$
Using these relation, we obtain
$$ \underline\Gm_1\eta = -u_t-2\omega u_z, \qquad 
 \underline\Gm_2\eta= \omega u_t+2 u_{\bar z}.$$
Then, by $\Gm_j((t-is+\eta)\omega+z)=0$ we obtain 
$ \Ga_1= -i(1+ \underline\Gm_1\eta)$ and 
$\Ga_2= i(\omega - \underline\Gm_2\eta).$
\end{proof}

\begin{Lem}
The conditions {\rm A1}, {\rm A2}, and {\rm A3} are satisfied 
if and only if $u_t \neq 1$. 
\end{Lem}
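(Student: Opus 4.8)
The plan is to check A1, A2 and A3 one at a time against the explicit expressions for $\Gf$ and for the disks $\cD_{(a,b)}$ obtained above, the computations being driven by the identities $\underline\Gm_1\eta=-u_t-2\omega u_z$ and $\underline\Gm_2\eta=\omega u_t+2u_{\bar z}$ already established in the proof of Proposition~\ref{Prop:deformed_zD}. Condition A2 is essentially free from that proposition: it gives $\zD\otimes\C=\langle\Gm_1,\Gm_2\rangle$ with $\Gm_1,\Gm_2$ as in \eqref{eq:deformed_Gm}, and since $\Pi_*\Gm_i=\underline\Gm_i$ while $\underline\Gm_1,\underline\Gm_2$ are everywhere linearly independent, the pair $\Gm_1,\Gm_2$ is independent as well; hence $\zD$ is of rank $2$, $(\Gf_\R)_*$ is of full rank $3=\dim\sT_\R$, and A2 holds. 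So what remains is to prove that A1 together with A3 holds exactly when $u_t\neq1$.

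For A1, I would first extend Proposition~\ref{Prop:deformed_zD} off the real locus. The same vector fields $\Gm_1,\Gm_2$, now read for $|\omega|<1$, together with $\del_{\bar\omega}$, annihilate the $\Gf$-pullbacks of the holomorphic coordinates on $\sT$ --- this is the computation in that proof, using the two identities above --- so $\zE=\ker\Gf_*^{1,0}=\langle\Gm_1,\Gm_2,\del_{\bar\omega}\rangle$ on $\cZ\setminus\cZ_\R$; a short auxiliary check (one Jacobian minor equals $-\omega$, with a separate look at $\omega=0$) shows $\Gf_*^{1,0}$ is surjective there, so $\zE$ has rank exactly $3$. As $\Gf$ is a map between real $6$-manifolds, $\Gf_*$ is an isomorphism at a point if and only if $\ker\Gf_*=\zE\cap\overline\zE$ vanishes, i.e. if and only if $\Gm_1,\Gm_2,\overline\Gm_1,\overline\Gm_2$ are linearly independent in $T_\C\cZ$. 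I would compute this by writing the four vectors in the frame $\del_t,\del_z,\del_{\bar z},\del_s$: their $\del_t$-, $\del_z$- and $\del_{\bar z}$-rows are the $u$-free rows $(-1,-\omega,-1,-\bar\omega)$, $(2\omega,0,0,2)$ and $(0,2,2\bar\omega,0)$, while the $\del_s$-row equals $i(1-u_t)(-1,\omega,1,-\bar\omega)$ plus $u_z$- and $u_{\bar z}$-terms proportional to the $\del_z$- and $\del_{\bar z}$-rows. Clearing those and doing one more row reduction yields
\[
 \det\bigl[\Gm_1,\Gm_2,\overline\Gm_1,\overline\Gm_2\bigr]=c\,(1-u_t)\,(1-|\omega|^2)^2
\]
for a nonzero constant $c$. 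Since $1-|\omega|^2>0$ on $\cZ\setminus\cZ_\R$, this is nowhere zero precisely when $u_t\neq1$; hence A1 holds if and only if $u_t\neq1$.

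It remains to handle A3, the partial indices of $\cD_{(a,b)}$. Because $\pi$ maps $\cD_{(a,b)}$ biholomorphically onto $\underline D_{(t,z)}$ and $\pi:\sT\to\sS$ is a $\C$-bundle, the complex normal bundle of $\cD_{(a,b)}$ in $\sT$ is an extension of the normal bundle of $\underline D_{(t,z)}$ in $\sS$ by the trivial $\pi$-vertical line bundle; doubling over $\CP^1$, and using that $\{\underline D_{(t,z)}\}$ satisfies B3 (Section~\ref{Section:model}) so that the quotient doubles to $\cO(2)$, one obtains $0\to\cO\to\widehat\cN\to\cO(2)\to0$. So the partial indices are $(1,1)$ or $(2,0)$, the latter exactly when this extension --- a class in $H^1(\CP^1,\cO(-2))\cong\C$ --- splits; a direct computation of that class, unwinding the identification $\cN\cong\Gp^*(T_\C M)/\langle\Gm_1,\Gm_2\rangle$ over the disk together with the boundary relation $\Gm_2=\omega\overline\Gm_1$ on $\cZ_\R$, shows that A3 holds when $u_t\neq1$. (Alternatively one may quote from \cite{bib:LM05} that a family of the expected dimension satisfying A1 and A2 automatically has balanced partial indices.)

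Putting the pieces together, A1, A2 and A3 hold simultaneously if and only if $u_t\neq1$. The one genuinely laborious step is the $4\times4$ determinant in the second paragraph: reducing it to the factored form $c\,(1-u_t)(1-|\omega|^2)^2$ is where the bookkeeping lives, although after the substitutions $u_z\omega,\ u_{\bar z}\bar\omega$ and the two identities quoted at the outset the cancellations are systematic.
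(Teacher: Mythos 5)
Your treatment of A1 and A2 is correct and is essentially the worked-out version of what the paper dismisses as ``easily checked'': the paper's proof simply asserts that \eqref{eq:deformed_Gm} gives A1 and A2 if and only if $u_t\neq1$, and says nothing more. Your determinant computation checks out --- the $\del_s$-row does decompose as $i(1-u_t)(-1,\omega,1,-\bar\omega)$ plus $iu_z$ times the $\del_z$-row plus $(-iu_{\bar z})$ times the $\del_{\bar z}$-row, and the reduced determinant factors as $-8i\,(1-u_t)(1-|\omega|^2)^2$; your auxiliary minor is also right (the $(\del_a,\del_{\bar a})$-minor of $\Gf_*^{1,0}$ equals $-\omega$ identically, with the $\omega=0$ case covered separately), and that surjectivity check is genuinely needed for the determinant criterion to characterize $\ker\Gf_*$, so it is good that you included it. Your observation that A2 holds unconditionally is also correct and slightly sharper than the paper's phrasing.

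Where you diverge from the paper is A3. The paper disposes of it in one sentence (``A3 always holds since $\zE$ is obtained as a continuous deformation from the standard model''), whereas you set up the extension $0\to\cO\to\widehat\cN\to\cO(2)\to0$ coming from the fibration $\pi$, correctly narrowing the splitting type to $(1,1)$ or $(2,0)$. But your decisive step --- ``a direct computation of that class \ldots shows that A3 holds when $u_t\neq1$'' --- is asserted, not performed, and the paper's continuity appeal is equally thin: partial indices can jump from $(1,1)$ to $(2,0)$ inside a continuous family without changing the expected dimension $\sum(k_i+1)=4$, so neither continuity nor a dimension count closes the argument by itself. The clean way to finish, consistent with your parenthetical fallback, is to note that A1 already rules out $(2,0)$: if $\widehat\cN\cong\cO(2)\oplus\cO$, the real sections of the $\cO$-summand are constants evaluating into a real (not complex) line at an interior point of the disk, so the interior evaluation map $H^0_\R(\widehat\cN)\to\cN_q$ cannot be an isomorphism, contradicting the full rank of $\Gf_*$ you have just established. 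Making that implication A1 $\Rightarrow$ A3 explicit would turn your sketch into a complete proof; as written, the A3 paragraph is the one place where the argument rests on an unexecuted computation.
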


\begin{proof}
By the description \eqref{eq:deformed_Gm}
of $\Gm_j$, we can easily check that 
the condition A1 and A2 hold if and only if $u_t\neq 1$. 
On the other hand, A3 always holds since $\zE$ is obtained 
as a continuous deformation from the standard model. 
\end{proof}

\begin{Lem}
Suppose $1-u_t>0$ on $M$, then 
the induced self-dual conformal structure 
on $M$ is represented by the indefinite metric 
$g_{(V,A)}$ as defined in \eqref{eq:monopole_metric} 
with $ V= 1- u_t$ and $\ A= \check * \, \check d u. $
\end{Lem}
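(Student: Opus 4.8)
The plan is to identify the induced structure through the uniqueness part of Proposition~\ref{Prop:recovering_thm_SD}. The hypothesis $1-u_t>0$ forces $u_t\neq1$, so by the preceding Lemma the conditions A1--A3 hold and Proposition~\ref{Prop:recovering_thm_SD} produces a unique self-dual conformal structure $[g]$ of signature $(--++)$ on $M$, characterized by the requirement that each surface $\GS_p=\{x\in M\mid p\in\del\cD_x\}$ (for $p\in\sP_h$) be a $\b$-surface; its tangent plane at a point $x$ is exactly $\Gp_*(\zD_u)$, where $u$ is the point of the circle $\Gp^{-1}(x)\cap\cZ_\R$ with $\Gf(u)=p$. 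On the other hand, by the Corollary following Proposition~\ref{eq:monopole_potential} the metric $g_{(V,A)}$ with $V=1-u_t$ and $A=\check*\,\check d u$ is self-dual (recall $u=Rh$), and since $V>0$ it has signature $(--++)$, the summand $-V^{-1}(ds+A)^2$ contributing one minus and $V\underline g$ having signature $(-++)$. Hence it suffices to check that each plane $\Gp_*(\zD_u)$ is null and anti-self-dual for $g_{(V,A)}$; then $[g]=[g_{(V,A)}]$ follows by uniqueness.

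First I would push the frame forward. Since $\Gp:(\cZ,\cZ_\R)\to M$ merely records the coordinates $(a,b)$, and the vector fields $\Gm_1,\Gm_2$ of \eqref{eq:deformed_Gm} have no $\del/\del\bar\omega$-component, computing $\Gp_*\Gm_1,\Gp_*\Gm_2$ amounts to rewriting \eqref{eq:deformed_Gm} in the coordinates $(a,\bar a,b,\bar b)$ on $M$, via $a=t+is$ (so that $\del_t=\del_a+\del_{\bar a}$ and $\del_s=i(\del_a-\del_{\bar a})$) and $b=z$. This yields two explicit $\omega$-dependent vector fields on $M$ whose complex span is $\Gp_*(\zD_u)\otimes\C=\langle\Gp_*\Gm_1,\Gp_*\Gm_2\rangle$; their real span is $T_x\GS_p$.

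Next I would verify that this $2$-plane is totally isotropic for $g_{(V,A)}$. Writing $A=-iu_z\,dz+iu_{\bar z}\,d\bar z$ and expanding $g_{(V,A)}$ in the frame $\del_s,\del_t,\del_z,\del_{\bar z}$ --- so that the only nonzero pairings are $g(\del_s,\del_s)=-V^{-1}$, $g(\del_t,\del_t)=-V$, $g(\del_s,\del_z)=-V^{-1}A_z$, $g(\del_s,\del_{\bar z})=-V^{-1}A_{\bar z}$, $g(\del_z,\del_z)=-V^{-1}A_z^{\,2}$, $g(\del_{\bar z},\del_{\bar z})=-V^{-1}A_{\bar z}^{\,2}$ and $g(\del_z,\del_{\bar z})=\frac12 V-V^{-1}A_zA_{\bar z}$ --- direct substitution of $\Gm_1=-\del_t+2\omega\del_z-i(V-2\omega u_z)\del_s$ and the analogous expression for $\Gm_2$ should give $g(\Gm_i,\Gm_j)=0$ for all $i,j\in\{1,2\}$, the $\omega$-, $u_z$-, $u_{\bar z}$- and $V$-dependent terms all cancelling. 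The monopole equation is not needed here, the null condition being pointwise. This shows each $\GS_p$ is a null surface for $g_{(V,A)}$, so $\Gp_*(\zD_u)$ is a maximal totally isotropic plane, hence either self-dual or anti-self-dual.

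The one genuinely delicate point --- and the step I expect to be the main obstacle --- is to see that $\Gp_*(\zD_u)$ is anti-self-dual rather than self-dual, since this distinction depends on a choice of orientation. Rather than fixing an orientation and computing the Hodge dual of the $2$-form $\Gm_1^\flat\wedge\Gm_2^\flat$, I would argue by continuity from the standard model: when $h=0$ we have $u=Rh=0$, so $(V,A)=(1,0)$ and $g_{(V,A)}=-|da|^2+|db|^2$ is the flat metric \eqref{eq:(2,2)metric}, whose $\b$-surfaces are precisely the $\GS_p$ (Section~\ref{Section:model}). Both $g_{(V,A)}$ and the field of planes $\Gp_*\zD$ depend continuously on $h$ through $u=Rh$, and the self-dual and anti-self-dual null $2$-planes of a $(--++)$ conformal structure form disjoint families, so $\Gp_*(\zD_u)$ remains anti-self-dual on the whole region $1-u_t>0$ --- this is the same continuity argument already used for condition A3 in the preceding Lemma. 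Therefore $g_{(V,A)}$ satisfies the property characterizing $[g]$ in Proposition~\ref{Prop:recovering_thm_SD}, and $[g]=[g_{(V,A)}]$.
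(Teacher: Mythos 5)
Your proposal is correct and takes essentially the same route as the paper: both reduce the identification, via the uniqueness statement behind Proposition \ref{Prop:recovering_thm_SD}, to the direct pointwise verification that $g_{(V,A)}(\Gm_j,\Gm_k)=0$ for the frame \eqref{eq:deformed_Gm}, and your computation of this (including the formula $A=-iu_z\,dz+iu_{\bar z}\,d\bar z$ and the cancellations) checks out. The only difference is that you spell out the self-duality of $g_{(V,A)}$ and the anti-self-duality of the planes $\Gp_*(\zD)$ (the latter by the same continuity-from-the-standard-model argument the paper uses for condition A3), points which the paper's one-line proof leaves implicit in its appeal to uniqueness.
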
 

\begin{proof}
As explained in Section \ref{Section:method}, 
there exists unique self-dual conformal structure on $M$ 
defined by $\zD$. 
Hence it is enough to check that 
the metric $g=g_{(V,A)}$ satisfies $g(\Gm_j,\Gm_k)=0$ 
for every $j$ and $k$, 
which is directly checked. 
\end{proof}

Thus the proof of Theorem \ref{Thm:main} is completed.

\section{From the space-time to the twistor} \label{Section:ST_to_Twistor} 

In this section, we establish the converse correspondence, that is, 
we start from self-dual metrics on $\R^4$ and 
determine the corresponding twistor spaces. 
This is directly deduced from Theorem \ref{Thm:Inverse} 
under a rapidly decreasing assumption.

\vspace{1ex}
\noindent 
{\bf Self-dual metrics from monopoles.} 
Let us start from the metric on 
$\R\times\R^{1,2} = \{(s,t,x)\}$ of the form 
$$g_{(V,A)} = -V^{-1}(ds+A)^2+ V\underline g$$ 
where $V\in C^\infty(\R^{1,2})$ is a function, 
$A\in \Omega^1(\R^{1,2})$ is a 1-form, and 
$\underline g$ is the flat Lorentz metric on $\R^{1,2}$ 
given by \eqref{eq:(1,2)metric}. 
As in Proposition \ref{Prop:SD<=>monopole}, 
$g_{(V,A)}$ is self-dual if and only if $V>0$ and $*dV=dA$. 
We call a solution to the monopole equation $*dV=dA$ just 
a monopole. 

Two pairs $(V,A)$ and $(V',A')$ are called {\it gauge equivalent} 
if and only if $V'=V$ and $A'=A+d\phi$ for some function $\phi$ on 
$\R^{1,2}$. Notice that the metrics $g_{(V,A)}$ and $g_{(V',A')}$ are 
isometric if $(V,A)$ and $(V',A')$ are gauge equivalent. 
Now recall that we write $M_0=\{(0,x)\in \R^{1,2}\}$. 

\begin{Prop}
Let $(V',A')$ is a monopole on $\R^{1,2}$. 
Suppose that the restriction of $(V'-1)|_{M_0}$ and $A'|_{M_0}$ 
are rapidly decreasing on $M_0$. 
Then there exists a pair of a unique function $u\in C^\infty(\R^{1,2})$ 
and a unique monopole $(V,A)$ which is gauge equivalent to 
$(V',A')$ such that 
\vspace{-1eX} 
\begin{itemize} 
\item $ V=1-u_t$ \ and \ $A=\check * \check d u$, \vspace{-1eX} 
\item $u$ solves the wave equation $\Box u=0$, and \vspace{-1eX} 
\item the restrictions $u|_{M_0}$ and $u_t|_{M_0}$ 
  are rapidly decreasing functions on $M_0$. 
\end{itemize} 
\end{Prop}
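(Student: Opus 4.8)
The plan is to produce $u$ by directly integrating the monopole data against a potential, then fix the gauge. First I would recall that the monopole equation $*dV=dA$ on $\R^{1,2}$ with the flat metric $\underline g$ means, writing $A=A_0\,dt+A_1\,dx_1+A_2\,dx_2$, that the combination $1-V$ plays the role of a ``time derivative of a potential'' while $\check *\,\check d$ of that same potential produces the spatial part of $A$. Concretely, on the initial plane $M_0$ the function $f_0(x):=\int_0^{?}(1-V')\,dt$ is not canonical, so instead I would work infinitesimally: the hypothesis $V'>0$, $*dV'=dA'$ says $d(A'-(\text{something}))$ is closed, and since $\R^{1,2}$ is simply connected there is a function $\phi$ with $A'+d\phi$ having the prescribed form. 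The cleanest route is: set $V=V'$, and seek $u$ with $u_t=1-V$ and $\check *\,\check d u = A + d\phi$ restricted appropriately; the integrability of this overdetermined system for $u$ is exactly the statement that $(V',A')$ is a monopole (this is the content of Proposition \ref{eq:monopole_potential} read backwards), so a solution $u$, unique up to an additive constant, exists, and the constant is pinned down by requiring $u|_{M_0}$ rapidly decreasing.

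The key steps, in order, are as follows. (1) From $*dV'=dA'$ deduce that the $1$-form $\beta:=\check *\,\check d$-complement of $A'$ minus the ``$(1-V')\,dt$'' term is closed on $\R^{1,2}$; integrate it to a function $\psi$, so that $A'-\check *\,\check d(\text{primitive}) = d\psi$ up to the time component, giving the gauge transformation $\phi$ with $A:=A'+d\phi$ of the desired shape $\check *\,\check d u$ for some $u$ determined up to a constant. (2) Simultaneously arrange $V=1-u_t$; this is consistent because the monopole equation couples the two conditions. (3) Show $\Box u=0$: this is immediate from Proposition \ref{eq:monopole_potential}, since $(V,A)$ with $V=1-u_t$, $A=\check *\,\check d u$ solves the monopole equation (it is gauge equivalent to the monopole $(V',A')$), hence $u$ solves the wave equation. (4) Fix the additive constant in $u$ so that $u|_{M_0}$ is rapidly decreasing; this is possible because $(V'-1)|_{M_0}$ rapidly decreasing forces $u_t|_{M_0}$ rapidly decreasing, and $A'|_{M_0}$ rapidly decreasing together with $A=\check *\,\check d u$ forces $\check d u|_{M_0}$, hence the gradient of $u|_{M_0}$, to be rapidly decreasing, so $u|_{M_0}$ differs from a rapidly decreasing function by a constant, which we subtract. (5) Uniqueness: any other such $u'$ would have $u'-u$ with vanishing $t$-derivative and vanishing $\check d$, hence constant, and the rapid-decrease normalization on $M_0$ forces that constant to be $0$; the gauge $(V,A)$ is then uniquely determined by $u$.

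The main obstacle is step (4) combined with uniqueness: one must be careful that the rapid-decrease hypotheses are on the restrictions to $M_0$ only, whereas the conclusions about $u$ and $u_t$ are also only on $M_0$, so no global decay of $u$ on all of $\R^{1,2}$ is needed or claimed — the point is purely to have well-posed Cauchy data so that Theorem \ref{Thm:Inverse} applies to reconstruct $u=Rh$ with $h\in\cS(\sC)$. I would also check that the gauge function $\phi$ can be chosen so that $A$ has \emph{no} $dt$-component, i.e. $A=\check *\,\check d u$ exactly (not merely up to an exact $1$-form); this uses the freedom in $\phi$ and the fact that the $dt$-component of the monopole equation relates $\partial_t$ of the spatial part of $A$ to $\check d V$, which is the derivative of $\check d(1-u_t)$, closing the loop. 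Once these bookkeeping points are settled, everything else is a direct appeal to the already-proven Propositions \ref{Prop:SD<=>monopole} and \ref{eq:monopole_potential} and to the simple connectedness of $\R^{1,2}$.
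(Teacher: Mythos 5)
Your overall architecture (keep $V=V'$, define $u$ by $u_t=1-V'$ together with an initial value on $M_0$, gauge--fix $A$ into the form $\check *\,\check d u$, deduce $\Box u=0$ from Proposition \ref{eq:monopole_potential}, then normalize and prove uniqueness using the decay condition) agrees with what the paper does, namely transplanting Propositions 4.3--4.4 of \cite{bib:NakataTran} from the de Sitter setting. However, there is a genuine gap at precisely the step the paper singles out as the one requiring care. You produce the gauge function $\phi$ by asserting that a certain $1$-form built from $A'$ and $(1-V')\,dt$ is \emph{closed} and then integrating it using simple connectedness of $\R^{1,2}$. This begs the question: the form $A'-\check *\,\check d u$ is closed only \emph{after} $u$ has been fully constructed, and the relation $u_t=1-V'$ determines $u$ only up to its initial value $f_0=u|_{M_0}$, which is exactly the remaining unknown. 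What is actually needed is a Helmholtz-type decomposition of $A'|_{M_0}$ on the plane $M_0$ into an exact part $\check d\check\phi$ plus a coexact part $\check *\,\check d f_0$, i.e.\ one must solve the Poisson equation $\Delta_{M_0}\check\phi=-\check *\,\check d\,\check * A'$ (and a companion equation for $f_0$) on the noncompact plane $\R^2$. Solvability of these equations, with solutions having the decay demanded by the third bullet of the statement, is where the rapidly decreasing hypothesis really enters; it is not a consequence of simple connectedness. In your write-up the decay hypothesis is used only at the very end to kill an additive constant, which signals that the essential analytic step has been bypassed.

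A smaller but related issue is step (4): you infer that $u|_{M_0}$ is rapidly decreasing because $\check d u|_{M_0}$ is. But $\check d u|_{M_0}$ is $\check *$ of $A|_{M_0}=A'|_{M_0}+d\phi|_{M_0}$, and the gradient of a solution of a Poisson equation on $\R^2$ with rapidly decreasing source decays in general only polynomially (like the gradient of a logarithmic potential) unless appropriate moment conditions on the source hold. Here they do hold, because the sources are divergences and curls of rapidly decreasing fields, but this must be verified; it is part of what the citation to \cite{bib:NakataTran} is carrying. So the statement you are proving is correct and your outline points in the right direction, but the proposal omits the Poisson-equation argument that constitutes the actual content of the proof.
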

\begin{proof}
This is proved in a completely similar way as the de Sitter case 
(Proposition 4.3 and 4.4 in \cite{bib:NakataTran}). 
Just one point which we should care is to 
determine a function $\check\phi\in C^\infty(M_0)$ 
satisfying $\Delta_{M_0}\check\phi= - \check * \check d \check * A$, 
which is cleared by the rapidly decreasing condition. 
\end{proof}

Summarizing all, we obtain the following. 

\begin{Thm}
Let $(V,A)$ be a monopole on $\R^{1,2}$ such that the restrictions 
$(V-1)|_{M_0}$ and $A|_{M_0}$ are rapidly decreasing on $M_0$. 
Changing $(V,A)$ by a uniquely determined gauge transform, 
we can write 
$V=1-u_t$ and $A=\check * \check d u$ by a unique 
solution $u\in C^{\infty}(\R^{1,2})$ to the wave equation $\Box u=0$ 
satisfying the rapidly decreasing condition 
$u|_{M_0}, u_t|_{M_0} \in \cS(M_0)$. 
Further if $V>0$, then 
the metric $g_{(V,A)}$ on $\R\times \R^{1,2}$ is self-dual 
and is obtained from the twistor space 
$(\sT,\sP_h)$ by the twistor construction 
where the function $h\in C^\infty(\sC)$ 
is determined from $u$ by Theorem \ref{Thm:Inverse}. 
\end{Thm}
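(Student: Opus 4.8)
The plan is to assemble the results already established, the theorem being essentially a bookkeeping of Theorems~\ref{Thm:Inverse} and~\ref{Thm:main}. First I would apply the preceding Proposition to the given monopole $(V,A)$: after replacing $(V,A)$ by its unique gauge-equivalent representative of the form $V=1-u_t$, $A=\check *\,\check d u$, we may assume $u\in C^\infty(\R^{1,2})$ solves the wave equation $\Box u=0$ and has rapidly decreasing restrictions $f_0:=u|_{M_0}$ and $f_1:=u_t|_{M_0}$ on $M_0\simeq\R^2$. Since gauge-equivalent pairs yield isometric metrics, it suffices to prove the statement for this representative; note also that by Proposition~\ref{Prop:SD<=>monopole} the hypothesis $V>0$ together with $*dV=dA$ (which holds by Proposition~\ref{eq:monopole_potential}, as $\Box u=0$) already gives that $g_{(V,A)}$ is self-dual.

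Next I would invoke Theorem~\ref{Thm:Inverse}. Because $u$ solves $\Box u=0$ and $f_0,f_1\in\cS(M_0)$, that theorem produces the function $h=\tfrac{1}{4\pi i}\,\cH_v(\del_v\hat f_0+\hat f_1)\in\cS(\sC)$ with $u=Rh$; moreover $h$ is determined uniquely by $u$ through this formula, so ``the function $h$ determined from $u$ by Theorem~\ref{Thm:Inverse}'' is well defined, and this is the $h$ appearing in the statement. Finally I would feed $h$ into Theorem~\ref{Thm:main}. The hypothesis $V>0$ is exactly $1-\del_t Rh>0$, i.e.\ $\del_t Rh<1$, so Theorem~\ref{Thm:main} applies: the twistor space $(\sT,\sP_h)$ carries a smooth $\C^2$-family of holomorphic disks, and the self-dual conformal structure induced on the parameter space $\C^2=\R\times\R^{1,2}$ by Proposition~\ref{Prop:recovering_thm_SD} is represented by $g_{(V',A')}$ with $V'=1-\del_t Rh$ and $A'=\check *\,\check d Rh$. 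Since $Rh=u$, we get $V'=1-u_t=V$ and $A'=\check *\,\check d u=A$, so $g_{(V,A)}$ is precisely the metric obtained from $(\sT,\sP_h)$ by the twistor construction, as claimed.

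The step that needs care — and where I expect the only genuine friction — is matching the two normalizations: the gauge fixed by the preceding Proposition must coincide with the one implicitly used in Section~\ref{Section:Twistor_to_ST}, so that not merely $dA$ but the potential $u$ itself, and the additive constant in $V$, agree on both sides. This is forced because in both constructions $u$ is singled out as the $\omega^0$-Fourier coefficient $H_0$ of the lifted boundary data, equivalently as the unique solution of $\Box u=0$ whose initial data lie in $\cS(M_0)$; once this identification is spelled out, no further computation remains, the self-duality itself being supplied by Propositions~\ref{Prop:SD<=>monopole} and~\ref{eq:monopole_potential}.
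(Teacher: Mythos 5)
Your proposal is correct and follows essentially the same route as the paper, which offers no separate proof beyond the phrase ``Summarizing all'': one applies the gauge-fixing Proposition to get $u$ with $\Box u=0$ and rapidly decreasing initial data, Theorem \ref{Thm:Inverse} to produce $h$ with $u=Rh$, and Theorem \ref{Thm:main} (whose hypothesis $\del_t Rh<1$ is exactly $V>0$) to identify $g_{(V,A)}$ with the metric induced by $(\sT,\sP_h)$. Your closing remark about matching the gauge normalization with the $\omega^0$-Fourier coefficient $H_0=Rh$ in Section \ref{Section:Twistor_to_ST} is a sensible point of care that the paper leaves implicit.
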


\vspace{1cm}
\noindent
{\bf Acknowledgement}
 
The author would like to thank Claude LeBrun for providing 
helpful ideas, and Simons Center 
SUNY for its hospitality in the summer 2011. 
He would also like to thank Mitsuji Tamura for helpful conversations 
and comments for hyperbolic PDEs.



\vspace{13mm}
\noindent
$\begin{array}{l}
\mbox{Department of Mathematics}\\
\mbox{Faculty of Science and Technology}\\
\mbox{Tokyo University of Science}\\
\mbox{Noda, Chiba, 278-8510, JAPAN}\\
\mbox{{\tt {nakata\_fuminori@ma.noda.tus.ac.jp}}}\\
\end{array}$

\end{document}